\documentclass[11pt]{article}

\usepackage[a4paper,margin=1in]{geometry}

\usepackage[T1]{fontenc}
\usepackage[utf8]{inputenc}

\usepackage{amsmath,amssymb,amsfonts,mathrsfs}
\usepackage{amsthm}
\usepackage{bm}
\usepackage[numbers,sort&compress]{natbib}

\newtheorem{theorem}{Theorem}[section]

\newtheorem{proposition}[theorem]{Proposition}
\newtheorem{corollary}[theorem]{Corollary}
\theoremstyle{definition}
\newtheorem{definition}[theorem]{Definition}
\theoremstyle{remark}
\newtheorem{remark}[theorem]{Remark}

\usepackage{graphicx}
\usepackage{subcaption}

\usepackage{array,booktabs,makecell}
\newcolumntype{P}[1]{>{\centering\arraybackslash}p{#1}}

\usepackage[colorlinks=true,
            linkcolor=blue,
            citecolor=blue,
            urlcolor=blue]{hyperref}

\begin{document}

\title{Global Attractors for Dissipative Flows on Degenerate Constraint Manifolds}

\author{Prasanta Sahoo\\
\small Midnapore College (Autonomous), Midnapore, West Bengal 721101, India\\
\small Email: \texttt{prasantmath123@yahoo.com}
}

\maketitle

\noindent\textbf{Keywords:} global attractor, constrained dynamical systems, asymptotic compactness, null distribution, geometric reduction\\

\noindent\textbf{Subject Classification numbers:} 37L30, 37D10, 37J25, 53C50

\begin{abstract}

A class of dissipative dynamical systems evolving on smooth constraint hypersurfaces endowed with degenerate induced bilinear forms is studied. The intrinsic evolution is generated by constraint--preserving vector fields on manifolds whose tangent bundles admit a nontrivial null distribution associated with the degeneracy of the induced structure. In this indefinite setting, the absence of coercive Lyapunov functionals prevents the direct application of classical attractor theory developed for Riemannian phase spaces. Dissipation is instead characterized relative to functionals that are compatible with the null distribution and exhibit decay exclusively in directions transverse to the associated foliation. Under suitable involutivity and regularity assumptions on the null distribution, all bounded trajectories are shown to be asymptotically confined to invariant leaves of the corresponding foliation. Asymptotic compactness of the intrinsic evolution is then established without coercivity by reducing the dynamics to a projected semiflow on the quotient manifold determined by the characteristic distribution. In the presence of a bounded absorbing set and continuous semiflow structure, the intrinsic evolution admits a compact global attractor saturated by the null leaves, whose effective asymptotic dynamics are governed by a compact invariant subset of the reduced phase space. Furthermore, when the compatible functional satisfies a Morse--type nondegeneracy condition in directions transverse to the null distribution and the induced transversal linearization admits no center spectrum, the projected semiflow possesses no neutral directions. The resulting framework provides a mechanism by which constraint--induced degeneracy enforces effective dimensional reduction in dissipative geometric evolution systems.
\end{abstract}


\let\oldthefootnote\thefootnote
\renewcommand{\thefootnote}{\textit{}}
\footnotetext{Corresponding author: \href{mailto:prasantmath123@yahoo.com}{prasantmath123@yahoo.com}}
\let\thefootnote\oldthefootnote

\section{Introduction}

The long--time behavior of dissipative dynamical systems has been extensively studied within the framework of evolution equations on finite and infinite dimensional phase spaces. A central objective in this theory is the characterization of invariant sets governing the asymptotic dynamics of solutions, most notably global attractors and their geometric or topological properties \cite{hale1988,temam1988,robinson2001,constantin1988,babin1992,chepyzhov2002,ball1997,haraux1991,henry1981,smoller1994,foias2001}.In the classical setting, the underlying phase space is assumed to be a Banach or Hilbert manifold endowed with a positive definite metric structure. Under this assumption, dissipative effects are typically quantified through coercive Lyapunov functionals whose monotonic decay ensures asymptotic compactness of the induced semiflow \cite{hale1988,temam1988,robinson2001,haraux1991}. The existence of a global attractor then follows from the presence of a bounded absorbing set together with appropriate compactness properties of the evolution operator \cite{babin1992,chepyzhov2002,ball1997}.

However, a number of geometric evolution systems arising from constrained variational principles naturally evolve on phase spaces possessing indefinite kinetic structure. In such cases, the evolution is restricted to a smooth constraint hypersurface of an ambient manifold equipped with a bilinear form of Lorentzian signature \cite{arnold1989,abraham1978,marsden1999,lee2013,oneill1983}. The induced structure on the constraint manifold may become degenerate along distinguished directions determined by the constraint, leading to the presence of a nontrivial null distribution in the tangent bundle \cite{beem1996,oneill1983}.

The geometric and dynamical consequences of such degeneracy are not addressed by the classical theory of dissipative systems, which fundamentally relies on coercivity in a Riemannian setting. In particular, the absence of a positive definite metric prevents the construction of Lyapunov functionals capable of controlling the full phase space norm. As a result, standard techniques for establishing asymptotic compactness, such as uniform dissipativity or gradient structure arguments, become inapplicable \cite{temam1988,robinson2001,henry1981}. Constraint manifolds endowed with indefinite structures also arise naturally in Hamiltonian systems with first--class constraints and in gauge--reduced evolution equations \cite{dirac1964,sundermeyer1982,goldstein2002,arnold1989}. In these settings, degeneracy of the induced bilinear form leads to foliations of the phase space by invariant submanifolds tangent to directions associated with the constraint structure \cite{abraham1978,marsden1999}. The corresponding distributions are typically integrable and induce a decomposition of the tangent bundle into neutral and transversal components.

Invariant foliations generated by degenerate distributions play an important role in the qualitative theory of dynamical systems \cite{hirsch1977,palis1982,katok1995,wiggins2003}. In particular, center manifold theory provides a mechanism by which neutral directions may confine long--time dynamics to lower dimensional invariant subsets \cite{carr1981,vandermeer1985}. Nevertheless, existing approaches rely on local spectral decompositions of linearized operators and do not account for degeneracy arising from geometric constraints on the phase manifold itself. More generally, the existence and dimension of global attractors for dissipative flows have been studied extensively in the context of semilinear evolution equations \cite{constantin1988,foias2001,babin1992,temam1988,robinson2001}. Estimates for the fractal or Hausdorff dimension of invariant sets are typically derived from smoothing properties of the associated semigroup or from the existence of inertial manifolds \cite{constantin1988,foias2001}. These techniques, however, presuppose a coercive energy functional and do not extend directly to flows on manifolds endowed with indefinite or degenerate induced structures. Geometric flows arising from variational principles, such as those encountered in constrained mechanical systems or gauge theories, often exhibit degeneracies which induce invariant foliations in phase space \cite{arnold1989,marsden1999}. The interplay between constraint geometry and dissipative effects in such systems remains poorly understood at the level of global attractor theory.

The purpose of the present work is to establish a mechanism by which constraint--induced foliations confine asymptotic dynamics in dissipative flows evolving on constraint hypersurfaces with degenerate induced structure. Dissipation is characterized relative to functionals compatible with the null distribution associated with the degeneracy. It is shown that, under suitable involutivity and regularity assumptions on this distribution and projectability of the intrinsic evolution, all bounded trajectories are asymptotically confined to invariant leaves of the corresponding foliation. This confinement permits the establishment of asymptotic compactness without the existence of coercive Lyapunov functionals and leads, in the presence of a bounded absorbing set and continuous semiflow structure, to the existence of a compact global attractor for the intrinsic evolution that is saturated with respect to the characteristic foliation. The effective long--time behavior of the system is then governed by the projected semiflow acting on the associated quotient manifold.

Under additional Morse--type nondegeneracy assumptions in directions transverse to the characteristic distribution, the projected dynamics admit no neutral directions and are governed by a compact invariant subset of the reduced phase space. These results provide a structural characterization of asymptotic dynamics for constrained geometric evolution systems and suggest that constraint--induced degeneracy constitutes a mechanism for effective dimensional reduction in dissipative evolution equations.

\section{Constrained Lorentzian Phase Manifolds}

The intrinsic evolution considered in the present work is assumed to be confined to a smooth constraint hypersurface embedded in a finite--dimensional manifold equipped with an indefinite metric structure. The purpose of this section is to introduce the geometric framework in which such constrained dissipative systems evolve and to identify the canonical distributions arising from degeneracy of the induced bilinear form.

Let $\mathcal{U}$ be a smooth $n$--dimensional manifold and let
\[
g:T\mathcal{U}\times T\mathcal{U}\rightarrow\mathbb{R}
\]
be a smooth symmetric $(0,2)$--tensor field of Lorentzian signature $(-,+,+,\dots,+)$. The pair $(\mathcal{U},g)$ will be referred to as an ambient Lorentzian phase space. Let $\Phi:\mathcal{U}\rightarrow\mathbb{R}$ be a smooth function satisfying the regularity condition $d\Phi(X)\neq 0 \quad\text{for all }X\in\Phi^{-1}(0)$. Then the regular level set $\mathcal{M}:=\Phi^{-1}(0)$ defines a smooth embedded hypersurface of $\mathcal{U}$ of dimension $(n-1)$. The ambient tensor field induces on $\mathcal{M}$ a symmetric bilinear form
\[
g_X^{\mathcal{M}}:=g_X\big|_{T_X\mathcal{M}\times T_X\mathcal{M}},
\qquad X\in\mathcal{M}.
\]
In general this restriction need not be nondegenerate. Throughout the present work it will be assumed that the induced form $g^{\mathcal{M}}$ has constant corank $k\geq 1$ on $\mathcal{M}$. Under this assumption its pointwise kernel
\[
\mathcal{N}_X
:=
\left\{
V\in T_X\mathcal{M}:
g^{\mathcal{M}}(V,W)=0
\ \text{for all }W\in T_X\mathcal{M}
\right\}
\]
defines a smooth rank--$k$ vector subbundle $\mathcal{N}:=\ker g^{\mathcal{M}}\subset T\mathcal{M}$, which will be referred to as the null distribution of the constraint manifold.

Let $\nabla\Phi$ denote the $g$--gradient of $\Phi$, defined by $g(\nabla\Phi,Y)=d\Phi(Y) \quad\text{for all }Y\in T\mathcal{U}$. Then the tangent space of the constraint manifold admits the characterization
\[
T_X\mathcal{M} = \left\{ Y\in T_X\mathcal{U}: g(\nabla\Phi,Y)=0 \right\}.
\]

In the Lorentzian setting the degeneracy of the induced bilinear form $g^{\mathcal{M}}$ is determined by the causal character of the normal vector field $\nabla\Phi$. Since $T_X\mathcal{M}=(\nabla\Phi(X))^{\perp}$, it follows that the induced form $g^{\mathcal{M}}$ fails to be nondegenerate at $X\in\mathcal{M}$ if and only if the normal vector $\nabla\Phi(X)$ is null, that is, $g(\nabla\Phi(X),\nabla\Phi(X))=0$. In this case one has $\nabla\Phi(X)\in T_X\mathcal{M}$, and therefore the restriction of $g$ to $T_X\mathcal{M}$ admits a nontrivial kernel. Under the standing assumption that the induced form has constant corank $k$, the null spaces $\mathcal{N}_X$ define a smooth vector subbundle $\mathcal{N}\subset T\mathcal{M}$ of constant rank $k$. Since $\mathcal{N}$ is a smooth subbundle of $T\mathcal{M}$ with constant rank, standard results on vector bundles imply the existence of a smooth complementary distribution $\mathcal{S}\subset T\mathcal{M}$ satisfying $T\mathcal{M}=\mathcal{N}\oplus\mathcal{S}$. Moreover, the restriction $g^{\mathcal{M}}\big|_{\mathcal{S}_X}$ is nondegenerate for every $X\in\mathcal{M}$, so that $\mathcal{S}$ determines a transversal subbundle on which the induced bilinear form is nondegenerate. The distribution $\mathcal{S}$ will therefore be referred to as a transversal complement of the null distribution.\\

Let $\mathcal{V}:\mathcal{U}\rightarrow T\mathcal{U}$ be a $C^{2}$ vector field satisfying the tangency condition $d\Phi(\mathcal{V})=0
\quad\text{on }\mathcal{M}$. Then $\mathcal{V}(X)\in T_X\mathcal{M}
\quad\text{for all }X\in\mathcal{M}$, and hence the induced intrinsic evolution on $\mathcal{M}$ is governed by the differential equation
\begin{equation}\nonumber
\dot{X}=\mathcal{V}(X), \qquad X\in\mathcal{M}.
\end{equation}

With respect to the splitting $T\mathcal{M}=\mathcal{N}\oplus\mathcal{S}$, the intrinsic vector field admits a unique decomposition of the form $\mathcal{V}=\mathcal{V}_{\mathcal{N}}+\mathcal{V}_{\mathcal{S}}$, where $\mathcal{V}_{\mathcal{N}}\in\Gamma(\mathcal{N})$ and $\mathcal{V}_{\mathcal{S}}\in\Gamma(\mathcal{S})$. The component $\mathcal{V}_{\mathcal{N}}$ represents the evolution tangent to the null distribution, while the component $\mathcal{V}_{\mathcal{S}}$ represents the evolution transverse to the degeneracy directions. The transversal component will determine the dissipative properties of the intrinsic flow in subsequent sections.

\section{Transversal Dissipation and Degenerate Lyapunov Structure}

The splitting of the tangent bundle introduced in the previous section permits the formulation of a notion of dissipation compatible with the degeneracy of the induced bilinear form. Since the induced tensor $g^{\mathcal{M}}$ is degenerate along the null distribution $\mathcal{N}$, decay cannot in general be quantified through Lyapunov functionals acting uniformly in all tangent directions. Dissipation may instead occur solely in directions transverse to $\mathcal{N}$.

Let $\mathcal{V}:\mathcal{M}\rightarrow T\mathcal{M}$ be a $C^{2}$ vector field generating the intrinsic evolution $\dot{X}=\mathcal{V}(X)$. Let $\Psi:\mathcal{M}\rightarrow\mathbb{R}$ be a continuously differentiable functional. Along solutions $X(t)$ of the intrinsic evolution one has
\[
\frac{d}{dt}\Psi(X(t))=d\Psi(\mathcal{V})(X(t)).
\]

Assume that $\Psi$ is compatible with the null distribution in the sense that $d\Psi(Y)=0 \quad\text{for all }Y\in\mathcal{N}_X, \qquad X\in\mathcal{M}$, and $\Psi$ is bounded from below on $M$. Under this condition $d\Psi$ depends only on the transversal component of tangent vectors with respect to the splitting $T\mathcal{M}=\mathcal{N}\oplus\mathcal{S}$. Let $\mathcal{V}=\mathcal{V}_{\mathcal{N}}+\mathcal{V}_{\mathcal{S}}$ denote the corresponding decomposition of the intrinsic vector field. Assume that there exists a positive constant $c>0$ such that
\[
d\Psi(\mathcal{V}_{\mathcal{S}})
\leq
-\,c\,\|\mathcal{V}_{\mathcal{S}}\|^{2}_{\mathcal{S}}
\quad\text{on }\mathcal{M},
\]
where $\|\cdot\|_{\mathcal{S}}$ denotes any smooth norm induced by the nondegenerate restriction of $g^{\mathcal{M}}$ to $\mathcal{S}$. Then along solutions of the intrinsic evolution one obtains
\[
\frac{d}{dt}\Psi(X(t))
=
d\Psi(\mathcal{V}_{\mathcal{S}})(X(t))
\leq
-\,c\,\|\mathcal{V}_{\mathcal{S}}(X(t))\|^{2}_{\mathcal{S}}.
\]
Thus decay of $\Psi$ occurs exclusively in directions transverse to the null distribution. Define the degeneracy set
\[
\mathcal{Z}:=\left\{X\in\mathcal{M}:d\Psi(\mathcal{V})(X)=0\right\}.
\]

\begin{proposition}
Suppose that $\Psi$ is compatible with $\mathcal{N}$ and satisfies the transversal dissipation estimate. Then
\[
\mathcal{Z}
=
\left\{
X\in\mathcal{M}:
\mathcal{V}(X)\in\mathcal{N}_X
\right\}.
\]
\end{proposition}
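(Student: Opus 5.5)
The plan is to prove the two inclusions separately. Both rest on one reduction: by compatibility, $d\Psi$ annihilates $\mathcal{N}$, so at every $X\in\mathcal{M}$ the splitting $\mathcal{V}=\mathcal{V}_{\mathcal{N}}+\mathcal{V}_{\mathcal{S}}$ gives
\[
d\Psi(\mathcal{V})(X)=d\Psi(\mathcal{V}_{\mathcal{N}})(X)+d\Psi(\mathcal{V}_{\mathcal{S}})(X)=d\Psi(\mathcal{V}_{\mathcal{S}})(X).
\]
Hence $\mathcal{Z}$ is exactly the zero set of $d\Psi(\mathcal{V}_{\mathcal{S}})$. Both directions then become statements about the transversal component $\mathcal{V}_{\mathcal{S}}$.

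For the inclusion ``$\supseteq$'', take $X$ with $\mathcal{V}(X)\in\mathcal{N}_X$. The decomposition $T_X\mathcal{M}=\mathcal{N}_X\oplus\mathcal{S}_X$ is direct, so uniqueness of the decomposition forces $\mathcal{V}_{\mathcal{N}}(X)=\mathcal{V}(X)$ and $\mathcal{V}_{\mathcal{S}}(X)=0$. The displayed identity then gives $d\Psi(\mathcal{V})(X)=d\Psi(0)=0$, so $X\in\mathcal{Z}$. This direction uses only compatibility, not the dissipation estimate.

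For the inclusion ``$\subseteq$'', take $X\in\mathcal{Z}$. Combining the identity with the transversal dissipation estimate gives
\[
0=d\Psi(\mathcal{V}_{\mathcal{S}})(X)\le -c\,\|\mathcal{V}_{\mathcal{S}}(X)\|_{\mathcal{S}}^{2}\le 0.
\]
Since $c>0$, this yields $\|\mathcal{V}_{\mathcal{S}}(X)\|_{\mathcal{S}}=0$. The conclusion $\mathcal{V}_{\mathcal{S}}(X)=0$ then requires $\|\cdot\|_{\mathcal{S}}$ to be positive definite on each fibre $\mathcal{S}_X$. Once that holds, $\mathcal{V}(X)=\mathcal{V}_{\mathcal{N}}(X)\in\mathcal{N}_X$, which proves the inclusion.

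The main obstacle is this definiteness step. The restriction $g^{\mathcal{M}}|_{\mathcal{S}}$ is only nondegenerate and may be indefinite, so its quadratic form $g^{\mathcal{M}}(v,v)$ can vanish on nonzero vectors. If that form were used in place of a norm, the argument would break down. I would therefore state explicitly that $\|\cdot\|_{\mathcal{S}}$ is read, as the text says, as a genuine fibrewise norm on $\mathcal{S}$: for instance $\|v\|_{\mathcal{S}}^2=h(v,v)$ for a smooth positive definite bundle metric $h$ on $\mathcal{S}$, chosen compatible with the nondegenerate form. With that reading, $\|v\|_{\mathcal{S}}=0$ implies $v=0$, and the proof is complete.
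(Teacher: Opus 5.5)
Your proof is correct and follows the paper's argument: compatibility reduces $d\Psi(\mathcal{V})$ to $d\Psi(\mathcal{V}_{\mathcal{S}})$, the dissipation estimate forces $\mathcal{V}_{\mathcal{S}}(X)=0$ on $\mathcal{Z}$, and the reverse inclusion uses compatibility alone. Your definiteness caveat is a fair point that the paper glosses over. In the paper's Lorentzian setting it resolves itself: any vector orthogonal to the nonzero null normal $\nabla\Phi(X)$ is either spacelike or a multiple of $\nabla\Phi(X)$, so $g^{\mathcal{M}}$ is positive semidefinite with kernel spanned by $\nabla\Phi$, and its restriction to any complement $\mathcal{S}$ is positive definite.
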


\begin{proof}
For any $X\in\mathcal{M}$ one has $d\Psi(\mathcal{V})(X)=d\Psi(\mathcal{V}_{\mathcal{S}})(X)$, since $d\Psi$ vanishes on $\mathcal{N}_X$ by compatibility. If $X\in\mathcal{Z}$, then $d\Psi(\mathcal{V}_{\mathcal{S}})(X)=0$, and the transversal dissipation estimate implies $\|\mathcal{V}_{\mathcal{S}}(X)\|_{\mathcal{S}}=0$. Hence $\mathcal{V}(X)\in\mathcal{N}_X$. Conversely, if $\mathcal{V}(X)\in\mathcal{N}_X$, then compatibility yields $d\Psi(\mathcal{V})(X)=0$.
\end{proof}

\begin{proposition}
Let $X(t)$ be a trajectory of the intrinsic evolution with
initial condition $X_{0}\in\mathcal{Z}$. Then
\begin{equation}\nonumber
\frac{d}{dt}\Psi(X(t))\le 0 \quad\text{for all }t\ge0.
\end{equation}
In particular, if $X(t)\in\mathcal{Z}$ for some $t\ge0$, then the intrinsic evolution is tangent to the null distribution at $X(t)$.
\end{proposition}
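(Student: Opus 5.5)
The plan is to obtain the monotonicity statement directly from the transversal dissipation estimate. The hypothesis $X_0\in\mathcal{Z}$ is only needed for the second assertion. I will work along the solution $X(t)$ of $\dot X=\mathcal{V}(X)$ on its forward interval of existence, which is unique since $\mathcal{V}$ is $C^2$. I will assume the standing hypotheses of the preceding discussion: $\Psi$ is compatible with $\mathcal{N}$ and satisfies $d\Psi(\mathcal{V}_{\mathcal{S}})\le -c\|\mathcal{V}_{\mathcal{S}}\|_{\mathcal{S}}^2$.

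\textbf{Step 1 (the derivative formula).} Since $\Psi\in C^1$ and $X(t)$ is $C^1$, the chain rule gives $\frac{d}{dt}\Psi(X(t))=d\Psi(\mathcal{V})(X(t))$. Decompose $\mathcal{V}=\mathcal{V}_{\mathcal{N}}+\mathcal{V}_{\mathcal{S}}$. Compatibility kills the $\mathcal{N}$-component, so
\[
\frac{d}{dt}\Psi(X(t))=d\Psi(\mathcal{V}_{\mathcal{S}})(X(t))\le -c\,\|\mathcal{V}_{\mathcal{S}}(X(t))\|_{\mathcal{S}}^{2}\le 0
\]
for every $t\ge0$ in the existence interval. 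This uses only the estimate, not the initial condition. If global existence is wanted, I will remark that $t\ge0$ is understood on the maximal forward interval, or is global under the absorbing-set assumptions introduced later.

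\textbf{Step 2 (tangency on $\mathcal{Z}$).} Suppose $X(t)\in\mathcal{Z}$ for some $t\ge0$. I apply the preceding Proposition, which identifies $\mathcal{Z}$ with $\{X:\mathcal{V}(X)\in\mathcal{N}_X\}$, at the point $X(t)$. This gives $\mathcal{V}(X(t))\in\mathcal{N}_{X(t)}$, i.e.\ the intrinsic evolution is tangent to the null distribution there. Equivalently, $\|\mathcal{V}_{\mathcal{S}}(X(t))\|_{\mathcal{S}}=0$ forces $\mathcal{V}_{\mathcal{S}}(X(t))=0$. In particular, at $t=0$ the hypothesis $X_0\in\mathcal{Z}$ gives $\mathcal{V}(X_0)\in\mathcal{N}_{X_0}$ and $\frac{d}{dt}\Psi(X(t))|_{t=0}=0$.

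\textbf{Main obstacle.} There is no real analytic difficulty. The one point needing care is that $\|\cdot\|_{\mathcal{S}}$ must be a genuine norm, i.e.\ positive definite, so that its vanishing forces $\mathcal{V}_{\mathcal{S}}=0$. Since $g^{\mathcal{M}}|_{\mathcal{S}}$ is merely nondegenerate and possibly indefinite, I will read ``smooth norm induced by'' as a fiberwise positive definite norm on $\mathcal{S}$, which is the reading already used in the proof of the preceding Proposition. The second assertion then follows verbatim from that Proposition.
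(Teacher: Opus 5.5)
Your proof is correct and follows the paper's argument. Compatibility reduces $\frac{d}{dt}\Psi(X(t))$ to $d\Psi(\mathcal V_{\mathcal S})(X(t))$, the transversal estimate gives nonpositivity, and on $\mathcal Z$ the vanishing of $\|\mathcal V_{\mathcal S}\|_{\mathcal S}$ forces tangency to $\mathcal N$; you cite the preceding proposition for this last step, while the paper re-derives it inline, and your reading of $\|\cdot\|_{\mathcal S}$ as a genuine positive-definite norm is the same assumption the paper makes implicitly.
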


\begin{proof}
Since $\Psi$ is compatible with the null distribution,
one has $d\Psi(\mathcal V)(X)=d\Psi(\mathcal V_{\mathcal S})(X)$. Hence the transversal dissipation estimate yields
\begin{equation}\nonumber
\frac{d}{dt}\Psi(X(t)) = d\Psi(\mathcal V_{\mathcal S})(X(t)) \le -\,c\,\|\mathcal V_{\mathcal S}(X(t))\|^2_{\mathcal S} \le 0.
\end{equation}
If $X(t)\in\mathcal{Z}$, then $d\Psi(\mathcal V)(X(t))=0$ and therefore $\|\mathcal V_{\mathcal S}(X(t))\|_{\mathcal S}=0$, which implies that the intrinsic evolution is tangent to the null distribution at $X(t)$.
\end{proof}

Let $\omega(X_{0})$ denote the $\omega$--limit set of a bounded trajectory.

\begin{proposition}
Every bounded trajectory satisfies $\omega(X_{0})\subseteq\mathcal{Z}$.
\end{proposition}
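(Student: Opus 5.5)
This is the LaSalle invariance principle adapted to the degenerate setting; the argument uses only the monotonicity established in the preceding proposition. The approach is to show that $\Psi$ is constant on $\omega(X_0)$ and that $\omega(X_0)$ is invariant, and then to read off $d\Psi(\mathcal V)=0$ there.

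\emph{Step 1: $\Psi$ tends to a limit.} By the previous proposition, which uses only compatibility and the transversal dissipation estimate, $t\mapsto\Psi(X(t))$ is nonincreasing along every trajectory. Since $\Psi$ is bounded from below on $\mathcal M$, the limit $\Psi_\infty:=\lim_{t\to\infty}\Psi(X(t))$ exists and is finite.

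\emph{Step 2: $\omega(X_0)$ is nonempty, compact and invariant, and $\Psi\equiv\Psi_\infty$ on it.} Boundedness of the trajectory is interpreted as relative compactness of $\{X(t):t\ge0\}$ in $\mathcal M$. This is automatic if $\mathcal M$ is properly embedded and bounded sets have compact closure in the finite-dimensional ambient space, with the closure staying inside the closed set $\Phi^{-1}(0)$. The standard theory of $C^1$ (here $C^2$) flows then gives three facts: $\omega(X_0)$ is nonempty and compact; it is invariant under the local flow; and solutions through points of $\omega(X_0)$ exist for all $t\in\mathbb R$ and remain in $\omega(X_0)$. If $Y\in\omega(X_0)$ with $X(t_n)\to Y$, continuity of $\Psi$ gives $\Psi(Y)=\Psi_\infty$, so $\Psi$ is constant on $\omega(X_0)$.

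\emph{Step 3: conclusion.} Fix $Y\in\omega(X_0)$ and let $Y(s)$ be the trajectory through $Y$. By invariance, $Y(s)\in\omega(X_0)$ for all $s\ge0$, so $\Psi(Y(s))=\Psi_\infty$ is constant in $s$. Differentiating at $s=0$ gives $d\Psi(\mathcal V)(Y)=0$, that is, $Y\in\mathcal Z$. Combined with the first proposition, this also yields $\mathcal V(Y)\in\mathcal N_Y$.

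The main obstacle is Step 2, namely making precise what ``bounded trajectory'' means on $\mathcal M$ so that relative compactness, and hence invariance and nonemptiness of $\omega(X_0)$, hold. I would state explicitly that bounded means precompact in $\mathcal M$. An alternative route avoids invariance altogether. Integrating the dissipation inequality gives
\[
c\int_0^\infty\|\mathcal V_{\mathcal S}(X(t))\|_{\mathcal S}^2\,dt\le\Psi(X_0)-\Psi_\infty<\infty.
\]
On the compact closure of the trajectory, $\|\mathcal V_{\mathcal S}\|_{\mathcal S}^2$ is uniformly continuous along $X(t)$, because $\mathcal V$ is bounded there and so $X(t)$ is Lipschitz in $t$. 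Barbalat's lemma then forces $\|\mathcal V_{\mathcal S}(X(t))\|_{\mathcal S}\to0$. By continuity, $\mathcal V_{\mathcal S}=0$ on $\omega(X_0)$, and the first proposition gives $\omega(X_0)\subseteq\mathcal Z$. I would keep this Barbalat-type argument as the backup if the invariance argument proves delicate.
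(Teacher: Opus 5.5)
Your argument is correct, and your main (LaSalle) route differs from the paper's. The paper does not use invariance of $\omega(X_0)$. It integrates the dissipation inequality to get $\int_0^\infty\|\mathcal V_{\mathcal S}(X(t))\|_{\mathcal S}^2\,dt<\infty$, picks a sequence of times along which $\|\mathcal V_{\mathcal S}(X(t))\|_{\mathcal S}\to0$, and concludes $\mathcal V_{\mathcal S}(Y)=0$ by continuity. As written, that step tacitly identifies the sequence supplied by integrability with the sequence $t_k$ along which $X(t_k)\to Y$. Integrability alone gives only $\liminf_{t\to\infty}\|\mathcal V_{\mathcal S}(X(t))\|_{\mathcal S}=0$, hence only that \emph{some} $\omega$-limit point lies in $\mathcal Z$.

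Your Barbalat backup is the paper's route with the missing ingredient supplied. Boundedness of $\mathcal V$ on the compact closure of the orbit makes $t\mapsto\|\mathcal V_{\mathcal S}(X(t))\|_{\mathcal S}^2$ uniformly continuous, which upgrades the $\liminf$ to a full limit; that is what an arbitrary $Y\in\omega(X_0)$ requires. This route keeps the $L^2$-in-time control of $\mathcal V_{\mathcal S}$ that the paper reuses later for the projected trajectory, but it genuinely needs precompactness and the constant $c>0$.

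The LaSalle route needs only the sign $d\Psi(\mathcal V)\le0$ and continuity of the local flow. It gives the sharper conclusion that $\omega(X_0)$ lies in an invariant subset of $\mathcal Z$ on which $\Psi\equiv\Psi_\infty$. The precompactness you flag as the main obstacle is not needed for the inclusion itself. Since you differentiate only at $s=0$, it suffices that $\varphi_s(Y)$ exists for small $s\ge0$. Then $X(t_k+s)\to\varphi_s(Y)$ by continuous dependence, so $\varphi_s(Y)\in\omega(X_0)$ and $\Psi(\varphi_s(Y))=\Psi_\infty$. Compactness matters only for $\omega(X_0)\neq\varnothing$ and for the global-in-time invariance you invoke, which Step 3 actually uses only near $s=0$.
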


\begin{proof}
Since $\Psi$ is nonincreasing along trajectories, the limit $\lim_{t\to\infty}\Psi(X(t))$ exists. Let $Y\in\omega(X_{0})$. There exists a sequence $t_{k}\to\infty$ such that $X(t_{k})\to Y$. Integrating the dissipation estimate over $[0,T]$ yields
\begin{equation}\nonumber
\int_{0}^{T}\|\mathcal{V}_{\mathcal{S}}(X(t))\|^{2}_{\mathcal{S}}\,dt
\leq \frac{1}{c} \left( \Psi(X_{0})-\Psi(X(T)) \right).
\end{equation}
Passing to the limit as $T\to\infty$ shows that
\begin{equation}\nonumber
\int_{0}^{\infty}\|\mathcal{V}_{\mathcal{S}}(X(t))\|^{2}_{\mathcal{S}}\,dt
<\infty.
\end{equation}
Consequently there exists a sequence $t_{k}\to\infty$ such that $\|\mathcal{V}_{\mathcal{S}}(X(t_{k}))\|_{\mathcal{S}}\to 0$. By continuity of $\mathcal{V}_{\mathcal{S}}$ this implies $\mathcal{V}_{\mathcal{S}}(Y)=0$, and hence $Y\in\mathcal{Z}$.
\end{proof}

Thus all asymptotic dynamics are confined to the set of points at which the intrinsic evolution becomes tangent to the null distribution.

\section{Null Foliations and Quotient Reduction}

The asymptotic confinement of bounded trajectories to the degeneracy set established in the previous section implies that long--time dynamics occur along directions tangent to the null distribution induced by the constraint manifold. In order to obtain a reduced dynamical description, it is necessary to identify conditions under which this distribution generates a regular quotient geometry.

Let $\mathcal{N}\subset T\mathcal{M}$ denote the null distribution associated with the degenerate bilinear form $g^{\mathcal{M}}$. Assume that $\mathcal{N}$ is involutive in the sense that for any smooth vector fields $U,V\in\Gamma(\mathcal{N})$ one has $[U,V]\in\Gamma(\mathcal{N})$. By Frobenius' theorem, $\mathcal{N}$ integrates to a smooth foliation $\mathscr{F}$ of $\mathcal{M}$ by immersed submanifolds $\{\mathcal{L}_\alpha\}_{\alpha\in A}$, satisfying $T_X\mathcal{L}_\alpha=\mathcal{N}_X \quad \text{for all }X\in\mathcal{L}_\alpha$. Assume in addition that the intrinsic vector field $V$
preserves the null foliation $\mathscr{F}$ in the sense
that for every smooth section 
$W\in\Gamma(\mathcal{N})$ one has
\[
[V,W]\in\Gamma(\mathcal{N}).
\]
Equivalently, the flow $\varphi_t$ generated by $V$
satisfies
\[
\varphi_t(\mathcal{L}_\alpha)=
\mathcal{L}_\alpha
\quad\text{for all }t\ge0.
\]
Under this condition the intrinsic evolution leaves each leaf of the null foliation invariant. In particular, $V$ is projectable with respect to the
foliation $\mathscr{F}$, and therefore there exists a
uniquely defined reduced vector field 
$V_* \in \mathfrak{X}(\mathcal{M}_{\mathrm{red}})$
satisfying
\[
D\pi_X(V(X))=V_*(\pi(X))
\quad\text{for all }X\in\mathcal{M}.
\]

In order that the leaf space admit a smooth manifold structure, it will be assumed that the leaves of $\mathscr{F}$ are embedded and compact and that the foliation is simple in the sense that the quotient space $\mathcal{M}_{\mathrm{red}} := \mathcal{M}/\mathscr{F}$ is Hausdorff. Under these assumptions, standard results on regular foliations imply that $\mathcal{M}_{\mathrm{red}}$ carries a unique smooth manifold structure for which the canonical projection $\pi:\mathcal{M}\rightarrow\mathcal{M}_{\mathrm{red}}$ is a smooth surjective submersion whose fibers coincide with the leaves of the null foliation. 

Since $\ker(D\pi_X)=\mathcal{N}_X$ for all $X\in\mathcal{M}$, the differential $D\pi$ induces a vector bundle isomorphism between the transversal bundle $\mathcal{S}$ and the tangent bundle of the reduced manifold:
\[
D\pi_X:\mathcal{S}_X\longrightarrow T_{\pi(X)}\mathcal{M}_{\mathrm{red}}.
\]
In particular, for each $X\in\mathcal{M}$ one has $T_{\pi(X)}\mathcal{M}_{\mathrm{red}} \cong \mathcal{S}_X$. Since the restriction $g^{\mathcal{M}}\big|_{\mathcal{S}_X}$ is nondegenerate for every $X\in\mathcal{M}$, this identification permits the definition of a Riemannian metric $g_{\mathrm{red}}$ on $\mathcal{M}_{\mathrm{red}}$ by requiring that for any $Y\in\mathcal{M}_{\mathrm{red}}$ and any $X\in\pi^{-1}(Y)$, $g_{\mathrm{red}}(u,v) := g^{\mathcal{M}}(U,V)$, where $U,V\in\mathcal{S}_X$ are the unique vectors satisfying $D\pi_X(U)=u, \qquad D\pi_X(V)=v$. This definition is independent of the choice of
representative $X$ since $\mathcal{N}=\ker g^{\mathcal{M}}$ and therefore $g^{\mathcal{M}}(U+N_1,V+N_2) = g^{\mathcal{M}}(U,V)$ for all $N_1,N_2\in\mathcal{N}_X$.


Under these assumptions, the long--time behavior of the intrinsic evolution on $\mathcal{M}$ may be analyzed through the projected semiflow acting on the reduced manifold $\mathcal{M}_{\mathrm{red}}$. The reduction of the intrinsic evolution to a semiflow acting on the smooth quotient manifold $\mathcal{M}_{\mathrm{red}}$ permits the formulation of compactness properties relative to the transversal geometry induced by the complement $\mathcal{S}$. In the subsequent section this reduction will be used to establish asymptotic compactness of the intrinsic evolution without the existence of a coercive Lyapunov functional.

\section{Asymptotic Compactness Without Coercivity}

The reduction of the intrinsic evolution to a projected semiflow on the quotient manifold established in the previous section permits the formulation of compactness properties relative to the transversal geometry induced by the complement $\mathcal{S}$. In the absence of a coercive Lyapunov functional acting uniformly on the ambient phase space, compactness must instead be derived from dissipation occurring solely in directions transverse to the null foliation. Throughout this section it will be assumed that the reduced manifold $M_{\mathrm{red}}$ is complete with respect to the Riemannian metric induced by the restriction of $g_M$ to the transversal bundle $S$.

Let $\mathcal{V}$ denote the intrinsic vector field generating the evolution $\dot{X}=\mathcal{V}(X)$ on the constraint manifold $\mathcal{M}$. Assume that there exists a continuously differentiable functional $\Psi:\mathcal{M}\rightarrow\mathbb{R}$ compatible with the null distribution and satisfying the transversal dissipation estimate $d\Psi(\mathcal{V}_{\mathcal{S}}) \leq -\,c\,\|\mathcal{V}_{\mathcal{S}}\|^2_{\mathcal{S}}$, for some constant $c>0$.

For each $c\in\mathbb{R}$ define the sublevel set $\mathcal{B}_c:=\{X\in\mathcal{M}:\Psi(X)\leq c\}$. Since $\Psi$ is nonincreasing along trajectories, one has $\Psi(X(t))\leq\Psi(X_0) \quad\text{for all }t\geq 0$.

Let $\pi:\mathcal{M}\rightarrow\mathcal{M}_{\mathrm{red}}$ denote the canonical projection onto the leaf space associated with the null foliation, and define the projected trajectory $Y(t):=\pi(X(t))$. Since the intrinsic vector field admits the decomposition $\mathcal{V}=\mathcal{V}_{\mathcal{N}}+\mathcal{V}_{\mathcal{S}}$, and $\mathcal{V}_{\mathcal{N}}$ is tangent to the leaves of the foliation, the projected evolution depends only on the transversal component. Consequently, $\dot{Y}(t) = D\pi\big(\mathcal{V}_{\mathcal{S}}(X(t))\big)$ defines a well--posed evolution on $\mathcal{M}_{\mathrm{red}}$. The transversal dissipation estimate implies
\[
\frac{d}{dt}\Psi(X(t)) \leq -\,c\,\|\mathcal{V}_{\mathcal{S}}(X(t))\|^2_{\mathcal{S}}.
\]
Integration over $[0,T]$ yields
\[
\int_0^T \|\mathcal{V}_{\mathcal{S}}(X(t))\|^2_{\mathcal{S}}\,dt
\leq \frac{1}{c} \left( \Psi(X_0)-\Psi(X(T)) \right),
\]
and hence
\[
\int_0^\infty \|\mathcal{V}_{\mathcal{S}}(X(t))\|^2_{\mathcal{S}}\,dt
<\infty.
\]
Since $D\pi$ restricts to an isomorphism on $\mathcal{S}$, it follows that
\[
\int_0^\infty
\|\dot{Y}(t)\|^2\,dt
<\infty.
\]

Assume in addition that the compatible functional $\Psi$ is proper on $M$, so that all sublevel sets $\{X\in M:\Psi(X)\le c\}$ are bounded.

\begin{proposition}
Suppose that $\mathcal{M}_{\mathrm{red}}$ is complete with respect to the Riemannian metric induced by $g^{\mathcal{M}}|_{\mathcal{S}}$. Then every projected trajectory $Y(t)$ admits a relatively compact $\omega$--limit set in $\mathcal{M}_{\mathrm{red}}$.
\end{proposition}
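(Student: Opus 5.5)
The plan is to show that the projected trajectory $Y(t)=\pi(X(t))$ stays in a compact subset of $\mathcal{M}_{\mathrm{red}}$. Once that is known, $\omega$-limit set theory gives that $\omega(Y_0)$ is nonempty, compact and connected. The energy estimate $\int_0^\infty\|\dot Y\|^2\,dt<\infty$ by itself only controls path length over short windows: by Cauchy--Schwarz,
\[
d_{\mathrm{red}}(Y(s),Y(t))\le |t-s|^{1/2}\Bigl(\int_s^t\|\dot Y\|^2\,d\tau\Bigr)^{1/2},
\]
which need not stay bounded as $t\to\infty$. Confinement therefore has to come from properness of $\Psi$, not from completeness alone. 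Completeness will be used only to pass from closed and bounded to compact via Hopf--Rinow.

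First, since $\Psi$ is nonincreasing along trajectories, $X(t)\in\mathcal{B}_{\Psi(X_0)}$ for all $t\ge0$. By the properness assumption this sublevel set is bounded, and since $\Psi$ is continuous it is also closed. I will read ``proper'' in the standard sense that sublevel sets are compact; if only boundedness is intended, I will add that bounded closed subsets of $\mathcal{M}$ are compact, which holds for instance when $\mathcal{M}$ carries a complete auxiliary Riemannian metric. Either way, $K:=\mathcal{B}_{\Psi(X_0)}$ is a compact set containing the whole forward orbit.

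Second, $\pi$ is continuous, so $\pi(K)$ is compact in $\mathcal{M}_{\mathrm{red}}$ and contains $Y(t)$ for all $t\ge0$. As a cross-check via the route in the statement, $\pi(K)$ is bounded for $d_{\mathrm{red}}$, being a compact set in a metric space. It is also closed, since compact sets in the Hausdorff space $\mathcal{M}_{\mathrm{red}}$ are closed. By Hopf--Rinow applied to the complete metric $g_{\mathrm{red}}$, its closed bounded subsets are compact, which confirms compactness independently of how $\pi(K)$ arose. Hausdorffness of $\mathcal{M}_{\mathrm{red}}$ is part of the standing simplicity assumption.

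Third, $\omega(Y_0)=\bigcap_{s\ge0}\overline{\{Y(t):t\ge s\}}$ is a decreasing intersection of nonempty closed subsets of the compact set $\pi(K)$. It is therefore nonempty and compact, and hence relatively compact. Connectedness follows from the standard argument for continuous curves, and invariance under the reduced flow of $V_*$ from projectability, i.e.\ $D\pi(V)=V_*\circ\pi$. I also note that $\pi(\omega(X_0))\subseteq\omega(Y_0)$, and in fact equality holds since $K$ is compact. Combined with the earlier proposition that every bounded trajectory satisfies $\omega(X_0)\subseteq\mathcal{Z}$, this places $\omega(Y_0)$ inside $\pi(\mathcal{Z})$, the zero set of $V_*$.

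The main obstacle is the first step, namely turning ``bounded sublevel sets'' into genuine precompactness of the orbit. On a manifold with only an indefinite $g$ there is no canonical notion of boundedness, so I will make the interpretation explicit, taking compact sublevel sets. If that is unavailable, a fallback is to use compactness of the leaves: if $\Psi$ descends to a function $\bar\Psi$ on $\mathcal{M}_{\mathrm{red}}$ whose sublevel sets are $d_{\mathrm{red}}$-bounded, then Hopf--Rinow applies directly to $\{\bar\Psi\le\Psi(X_0)\}$. The descent holds because $d\Psi|_{\mathcal{N}}=0$ and the leaves are connected.
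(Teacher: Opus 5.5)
Your proof is correct, but it gets compactness from a different place than the paper does.

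The paper argues as follows:
\begin{itemize}
\item properness keeps $X(t)$ in a bounded subset of $\mathcal{M}$;
\item continuity of $\pi$ and compactness of the leaves make $Y(t)$ bounded in $\mathcal{M}_{\mathrm{red}}$;
\item completeness, phrased as ``standard compactness results for curves of finite energy'' but in effect Hopf--Rinow, then yields convergent subsequences.
\end{itemize}
The paper also extracts $\|\dot Y(t_k)\|\to0$ from the energy bound, but that step does no work toward the conclusion.

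You instead read ``proper'' in its standard sense, so that $K=\mathcal{B}_{\Psi(X_0)}$ is compact in $\mathcal{M}$. You push $K$ forward to the compact set $\pi(K)$ and realize $\omega(Y_0)$ as a nested intersection of nonempty closed subsets of $\pi(K)$. This gives a fully rigorous argument and exposes two soft spots in the paper's version. First, continuity of $\pi$ does not by itself carry bounded sets to bounded sets unless ``bounded'' in $\mathcal{M}$ means relatively compact, since $\mathcal{M}$ has no canonical metric. In that case the completeness hypothesis is redundant. Second, as your Cauchy--Schwarz estimate shows, the finite-energy bound on $[0,\infty)$ gives no confinement by itself.

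Your fallback descends $\Psi$ to $\bar\Psi$ on $\mathcal{M}_{\mathrm{red}}$ and applies Hopf--Rinow to the closed set $\{\bar\Psi\le\Psi(X_0)\}$. That is the version that genuinely uses completeness, and it is the rigorous form of what the paper's proof is reaching for. Its price is an extra hypothesis, $d_{\mathrm{red}}$-bounded sublevel sets of $\bar\Psi$, which the paper does not state.

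Finally, your remark $\omega(Y_0)\subseteq\pi(\mathcal{Z})$ is extraneous to the statement. If you keep it, do not rely on the paper's proof of $\omega(X_0)\subseteq\mathcal{Z}$. That proof uses a sequence with $\|\mathcal{V}_{\mathcal{S}}(X(t_k))\|_{\mathcal{S}}\to0$ that is unrelated to the chosen limit point. With $K$ compact, Barbalat's lemma repairs it: $t\mapsto\|\mathcal{V}_{\mathcal{S}}(X(t))\|_{\mathcal{S}}^2$ is integrable and uniformly continuous, so $\|\mathcal{V}_{\mathcal{S}}(X(t))\|_{\mathcal{S}}\to0$.
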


\begin{proof}
Let $\{t_k\}$ be any sequence with $t_k \to \infty$. Since $\Psi$ is proper on $M$ and nonincreasing along trajectories, the trajectory $X(t)$ remains in a bounded subset of $M$. Continuity of $\pi$ and compactness of the leaves then imply that $Y(t)=\pi(X(t))$
remains in a bounded subset of $M_{\mathrm{red}}$. As
\[
\int_0^\infty \|\dot{Y}(t)\|^2 dt < \infty,
\]
there exists a subsequence, again denoted by $\{t_k\}$, such that $\|\dot{Y}(t_k)\| \to 0$. Since $Y(t)$ is absolutely continuous and
remains in a bounded subset of the complete Riemannian manifold $M_{\mathrm{red}}$, standard compactness results for curves of
finite energy imply that the sequence $\{Y(t_k)\}$ admits a convergent subsequence. Hence the $\omega$--limit set of $Y(t)$ is relatively compact in $\mathcal{M}_{\mathrm{red}}$.
\end{proof}

Assume in addition that the leaves of the null foliation are compact. Then for any bounded trajectory $X(t)$ and any sequence $t_k\to\infty$, the projected sequence $Y(t_k)=\pi(X(t_k))$ admits a convergent subsequence in $\mathcal{M}_{\mathrm{red}}$. Compactness of the leaves implies that each fiber $\pi^{-1}(Y)$ is compact in $\mathcal{M}$.

\begin{proposition}
Every bounded trajectory possesses a precompact $\omega$--limit set in $\mathcal{M}$.
\end{proposition}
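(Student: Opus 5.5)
The plan is to reduce precompactness in $\mathcal{M}$ to precompactness in $\mathcal{M}_{\mathrm{red}}$, and then lift back through the compact fibers of $\pi$. The key tool will be that a surjective submersion with compact fibers over a Hausdorff base is proper, so preimages of compact sets are compact.

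\emph{Step 1 (compactness downstairs).} Fix a bounded trajectory $X(t)$ and a sequence $t_k\to\infty$. By the preceding proposition, together with the remark that follows it, the projected sequence $Y(t_k)=\pi(X(t_k))$ has a subsequence converging to some $Y_\infty\in\mathcal{M}_{\mathrm{red}}$. Pick a compact neighbourhood $K$ of $Y_\infty$; it exists because $\mathcal{M}_{\mathrm{red}}$ is a finite-dimensional manifold. For all large $k$ along the subsequence, $Y(t_k)\in K$.

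\emph{Step 2 (properness of $\pi$).} I will show that $\pi^{-1}(K)$ is compact. Each fiber $\pi^{-1}(Y)$ is a compact leaf. Since $\pi$ is a smooth submersion, local triviality near a compact fiber is available: Ehresmann's theorem, or Reeb stability for a simple foliation with compact leaves. So each $Y\in K$ has a neighbourhood $U_Y$ such that $\pi^{-1}(\overline{U_Y})$ is compact, for instance homeomorphic to $\overline{U_Y}\times\pi^{-1}(Y)$ with $\overline{U_Y}$ a small closed ball. Cover $K$ by finitely many such $U_Y$. Then $\pi^{-1}(K)$ is a closed subset of a finite union of compact sets, and hence compact. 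This is the step I expect to be the main obstacle. It is the only place where compactness of the leaves and Hausdorffness of the quotient are genuinely used, and one must make sure that the leaves being embedded and compact, together with the simple-foliation assumption, really gives a locally trivial fibration. If full local triviality is awkward, I would fall back on the tubular-neighbourhood argument: a compact leaf $\mathcal{L}$ has a saturated neighbourhood with compact closure, because nearby leaves in a simple foliation are graphs over $\mathcal{L}$ in a normal tubular neighbourhood.

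\emph{Step 3 (lifting and conclusion).} By Step 2, the points $X(t_k)$ with $k$ large lie in the compact set $\pi^{-1}(K)$, so a further subsequence converges in $\mathcal{M}$. The limit lies in $\omega(X_0)$. Hence every sequence $X(t_k)$ with $t_k\to\infty$ has a convergent subsequence, which shows that $\omega(X_0)$ is nonempty. Next I will show that $\omega(X_0)$ is closed; this is standard, via a diagonal argument. Let $Z_j\in\omega(X_0)$ with $Z_j\to Z$, and choose times $s_j\ge j$ with $d(X(s_j),Z_j)<1/j$. Then $X(s_j)\to Z$. Finally, I will show that $\omega(X_0)$ lies in a compact set. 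Every point of $\omega(X_0)$ projects into $\omega(Y)$, the $\omega$-limit set of the projected trajectory, and $\omega(Y)$ is relatively compact by the previous proposition. So $\omega(X_0)\subseteq\pi^{-1}(\overline{\omega(Y)})$, which is compact by Step 2 applied with $K=\overline{\omega(Y)}$. Thus $\omega(X_0)$ is a closed subset of a compact set, hence compact and in particular precompact.
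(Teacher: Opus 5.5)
Your proposal is correct and follows essentially the same route as the paper: project to $\mathcal{M}_{\mathrm{red}}$, extract a convergent subsequence of $\pi(X(t_k))$ from the preceding proposition, and lift back through the compact fibres of $\pi$. Your Step 2 (properness of $\pi$) is what actually justifies the paper's step ``continuity of $\pi$ gives $V$ with $\pi^{-1}(V)\subset U$'', since continuity alone does not give this. Justify Step 2 by Reeb stability, using that the fibres are connected compact leaves with trivial holonomy, and not by Ehresmann's theorem, which already assumes properness.
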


\begin{proof}
Let $\{t_k\}$ be any sequence with $t_k \to \infty$.
By Proposition 4, the projected sequence $Y(t_k)=\pi(X(t_k))$ admits a convergent subsequence, which we again denote by $Y(t_k) \to Y_\infty \in \mathcal{M}_{\mathrm{red}}$. Since the leaves of the null foliation are compact and coincide with the fibers of $\pi$, the set $\pi^{-1}(Y_\infty)$ is compact in $\mathcal{M}$.

For any neighbourhood $U$ of $\pi^{-1}(Y_\infty)$, continuity of $\pi$ implies that there exists a neighbourhood $V$ of $Y_\infty$ such that $\pi^{-1}(V)\subset U$. Since $Y(t_k)\to Y_\infty$, one has $X(t_k)\in \pi^{-1}(V)\subset U$ for all sufficiently large $k$. Thus $\{X(t_k)\}$ is eventually contained in an arbitrarily small neighbourhood of a compact set and therefore admits a convergent subsequence in $\mathcal{M}$.
\end{proof}

\begin{definition}
The intrinsic evolution is said to be asymptotically compact if for every bounded sequence $\{X_k\}\subset\mathcal{M}$ and every sequence $t_k\to\infty$, the sequence $X(t_k;X_k)$ admits a convergent subsequence in $\mathcal{M}$.
\end{definition}

Assume further that the intrinsic evolution generated by $V$ defines a continuous closed semiflow on $M$.

\begin{theorem}

Suppose that the intrinsic evolution defines a
continuous closed semiflow on $M$, admits a
null--compatible functional exhibiting transversal
dissipation, that the reduced manifold $M_{\mathrm{red}}$
is complete, and that the null foliation has compact
embedded leaves.
\end{theorem}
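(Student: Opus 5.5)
Since the statement as printed breaks off after its hypotheses, I will prove the conclusion that the section title, the preceding definition and the abstract point to. I will first show that the intrinsic evolution is asymptotically compact. I will then show that it possesses a compact global attractor $\mathcal{A}$ saturated by the leaves of $\mathscr{F}$, with $\pi(\mathcal{A})$ a compact invariant set of the projected semiflow.

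\textbf{Step 1: asymptotic compactness.} Let $\{X_k\}$ be bounded and $t_k\to\infty$. Properness of $\Psi$ (assumed before Proposition 4) together with finite dimensionality of $\mathcal{M}$ makes the closure of $\{X_k\}$ compact. Since $\Psi$ is continuous, $C:=\sup_k\Psi(X_k)<\infty$. Because $\Psi$ is nonincreasing along trajectories, $X(t_k;X_k)\in\mathcal{B}_C$ for every $k$. The set $\mathcal{B}_C$ is closed and bounded, hence compact by properness, so a convergent subsequence exists. As an alternative route that uses the foliation, I would project to $\mathcal{M}_{\mathrm{red}}$, invoke completeness as in Proposition 4, and lift through the compact fibres exactly as in the proof of Proposition 5. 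This route is uniform in the initial data because all trajectories stay in the single set $\mathcal{B}_C$.

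\textbf{Step 2: a bounded absorbing set.} This is the step I expect to be the main obstacle. Monotonicity of $\Psi$ only shows that each bounded set $B$ is contained in a positively invariant compact set $\mathcal{B}_{C(B)}$, and $C(B)$ depends on $B$. A global attractor needs a single bounded set $\mathcal{B}_0$ that absorbs every bounded set. I would try to build it as follows. By Proposition 3, all $\omega$-limit sets lie in $\mathcal{Z}$. If $\Psi$ is bounded on $\mathcal{Z}$, say $\Psi\le C_0$ there, take $\mathcal{B}_0=\mathcal{B}_{C_0+1}$. A LaSalle-type argument, combining compactness of $\mathcal{B}_{C(B)}$ with continuity of the semiflow, should then show that every bounded $B$ enters $\mathcal{B}_0$ in a uniform time. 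The uniformity would come from the integral bound $\int_0^\infty\|\mathcal{V}_{\mathcal{S}}\|^2_{\mathcal{S}}\,dt\le (C(B)-\inf\Psi)/c$ and a compactness contradiction argument. If this fails, I would adopt the bounded absorbing set as a hypothesis, as the abstract indicates.

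\textbf{Step 3: attractor and saturation.} With $\mathcal{B}_0$ in hand, Step 1, continuity of the semiflow and the classical theorem \cite{hale1988,temam1988,robinson2001} give the compact invariant attractor $\mathcal{A}=\omega(\mathcal{B}_0)$. I may replace $\mathcal{B}_0$ by its saturation $\pi^{-1}(\pi(\mathcal{B}_0))$. This set is still bounded, because the leaves are compact and $\pi$ is a proper submission with compact fibres, and it is still absorbing. Since $\varphi_t(\mathcal{L}_\alpha)=\mathcal{L}_\alpha$, each $\varphi_t(\mathcal{B}_0)$ is then saturated. A limit of saturated sets taken along compact fibres remains saturated, so $\mathcal{A}$ is a union of leaves. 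Finally, the relation $D\pi(\mathcal{V})=\mathcal{V}_*\circ\pi$ gives $\pi\circ\varphi_t=\varphi^{\mathrm{red}}_t\circ\pi$. Hence $\pi(\mathcal{A})$ is compact and invariant under the projected semiflow, and $\mathcal{A}=\pi^{-1}(\pi(\mathcal{A}))$.
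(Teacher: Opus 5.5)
Judging from its proof, the theorem asserts only that the intrinsic evolution is asymptotically compact. Your Step 1 therefore already proves the whole statement, and it does so by a different route from the paper.

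The paper works through the quotient. It sets $Y_k=\pi(X(t_k;X_k))$, uses completeness of $\mathcal{M}_{\mathrm{red}}$ through its Proposition 4 to extract $Y_k\to Y_\infty$, and then lifts through the compact fibre $\pi^{-1}(Y_\infty)$. This is essentially your ``alternative route''.

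Your main argument is that every $X(t_k;X_k)$ lies in the single compact sublevel set $\mathcal{B}_C$ with $C=\sup_k\Psi(X_k)$. It is shorter and more robust, because it supplies the uniform bound that the paper's proof leaves implicit. The paper infers boundedness of $\{Y_k\}$ from boundedness of the initial data $\{X_k\}$. It also applies Proposition 4, which is a statement about a single trajectory, to a family of different trajectories. Your argument also shows that, once $\Psi$ is proper, completeness of $\mathcal{M}_{\mathrm{red}}$ and compactness of the leaves play no role in this conclusion. The paper's route only buys a picture of compactness as coming from the transversal geometry.

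One small correction to Step 1: $\sup_k\Psi(X_k)<\infty$ follows from bounded subsets of $\mathcal{M}$ being relatively compact together with continuity of $\Psi$, not from properness. Properness, with $\Psi$ bounded below, is what makes $\mathcal{B}_C$ compact.

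Steps 2--3 go beyond the theorem, and Step 2 cannot be obtained from its hypotheses. The field $\mathcal{V}\equiv 0$ satisfies all of them, yet on a noncompact $\mathcal{M}$ there is no bounded absorbing set. So the absorbing set has to be assumed, as the paper does at the start of its dimensional-reduction section.

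Your LaSalle idea does work under the extra hypothesis $\sup_{\mathcal{Z}}\Psi=C_0<\infty$. The set $\mathcal{B}_{C(B)}\cap\{\Psi\ge C_0+1\}$ is compact and disjoint from $\mathcal{Z}$, so $\|\mathcal{V}_{\mathcal{S}}\|^2_{\mathcal{S}}\ge\delta>0$ on it. Hence every orbit from $B$ enters $\mathcal{B}_{C_0+1}$ by time $(C(B)-C_0-1)/(c\delta)$.

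Be careful in Step 3, though. The literal leaf invariance $\varphi_t(\mathcal{L}_\alpha)=\mathcal{L}_\alpha$ forces $\mathcal{V}$ to be tangent to every leaf, hence $\mathcal{Z}=\mathcal{M}$. Together with that extra hypothesis and properness, this makes $\mathcal{M}$ compact. Use instead only that $\varphi_t$ maps leaves onto leaves, which is what $[\mathcal{V},W]\in\Gamma(\mathcal{N})$ actually gives. That is enough for $\varphi_t$ to preserve saturated sets. Since $\pi$ is open, closures and intersections of saturated sets stay saturated, so $\omega(\mathcal{B}_0)$ is saturated.
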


\begin{proof}
Let $\{X_k\}\subset M$ be a bounded sequence and let
$t_k\to\infty$. Define
\[
Y_k:=\pi\big(X(t_k;X_k)\big).
\]
Since $\pi$ is continuous and the leaves are compact,
bounded subsets of $M$ have bounded image in
$M_{\mathrm{red}}$. Hence $\{Y_k\}$ is bounded in
$M_{\mathrm{red}}$. By Proposition~4, the sequence $\{Y_k\}$ admits a
convergent subsequence in $M_{\mathrm{red}}$.
Passing to a subsequence if necessary, assume that
\[
Y_k\to Y_\infty \in M_{\mathrm{red}}.
\]
Since the leaves of the null foliation are compact and
coincide with the fibers of $\pi$, the set
$\pi^{-1}(Y_\infty)$ is compact in $M$.

Let $U$ be any neighbourhood of $\pi^{-1}(Y_\infty)$.
By continuity of $\pi$, there exists a neighbourhood
$V$ of $Y_\infty$ such that
\[
\pi^{-1}(V)\subset U.
\]
Since $Y_k\to Y_\infty$, one has
$X(t_k;X_k)\in\pi^{-1}(V)\subset U$
for all sufficiently large $k$.
Hence $\{X(t_k;X_k)\}$ is eventually contained in an
arbitrarily small neighbourhood of a compact set and
therefore admits a convergent subsequence in $M$.
\end{proof}


The asymptotic compactness established above ensures the existence of invariant compact sets governing the long--time dynamics of the intrinsic evolution. Since dissipation occurs solely in directions transverse to the null foliation, these invariant sets must be confined to a subset of reduced effective dimension. In the subsequent section, this observation will be used to derive an upper bound for the topological dimension of the global attractor.

\section{Dimensional Reduction on the Quotient Attractor}

The asymptotic compactness established in the previous section ensures the existence of invariant compact sets governing the long--time dynamics of the intrinsic evolution. Since dissipation occurs exclusively in directions transverse to the null foliation, the effective dynamics are governed by the reduced evolution on the quotient manifold. Assume that the intrinsic evolution admits a bounded
absorbing set in $M$, that is, there exists a bounded set $\mathcal{B}\subset M$ such that for every bounded subset $D\subset M$ there exists $T_D\ge 0$ satisfying $\phi_t(D)\subset \mathcal{B}
\quad\text{for all } t\ge T_D$.

Let the intrinsic evolution generated by the constraint--preserving vector field $\mathcal{V}$ define a continuous semiflow on $\mathcal{M}$ which is asymptotically compact and admits a bounded absorbing set. Then, by standard results in the theory of dissipative dynamical systems, there exists a unique compact invariant set $\mathcal{A}\subset\mathcal{M}$ such that for every bounded initial condition $X_0\in\mathcal{M}$,
\[
\lim_{t\to\infty}
\operatorname{dist}\big(X(t;X_0),\mathcal{A}\big)=0.
\]
The set $\mathcal{A}$ will be referred to as the global attractor of the intrinsic evolution.

Let $\mathscr{F}$ denote the null foliation associated with the involutive distribution $\mathcal{N}$ and let $\pi:\mathcal{M}\rightarrow\mathcal{M}_{\mathrm{red}}$ be the canonical projection onto the leaf space. Since the $\omega$--limit set of every bounded trajectory is contained in the degeneracy set $\mathcal{Z}$ and trajectories entering $\mathcal{Z}$ remain tangent to the null distribution, it follows that the attractor is saturated with respect to the foliation:
\[
X\in\mathcal{A}
\quad\Longrightarrow\quad
\mathcal{L}_X\subset\mathcal{A},
\]
where $\mathcal{L}_X$ denotes the leaf of $\mathscr{F}$ passing through $X$.

\begin{proposition}
The projected set $\mathcal{A}_\ast:=\pi(\mathcal{A})\subset\mathcal{M}_{\mathrm{red}}$ is compact and invariant under the reduced semiflow on $\mathcal{M}_{\mathrm{red}}$.
\end{proposition}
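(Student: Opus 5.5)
Compactness of $\mathcal{A}_\ast$ follows from the topological setting. The attractor $\mathcal{A}$ is compact in $\mathcal{M}$, and by Section~4 the canonical projection $\pi:\mathcal{M}\to\mathcal{M}_{\mathrm{red}}$ is a smooth, hence continuous, surjective submersion onto a Hausdorff manifold. The continuous image $\pi(\mathcal{A})$ of a compact set is therefore compact. Because $\mathcal{M}_{\mathrm{red}}$ is Hausdorff, $\mathcal{A}_\ast$ is also closed. Saturation of $\mathcal{A}$ is not needed for this step.

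Invariance requires a conjugacy between the semiflow $\varphi_t$ on $\mathcal{M}$ and the reduced semiflow $\psi_t$ generated by $V_*$ on $\mathcal{M}_{\mathrm{red}}$. The plan is to prove
\begin{equation}\nonumber
\pi\circ\varphi_t=\psi_t\circ\pi\quad\text{for all } t\ge0 .
\end{equation}
Fix $X_0\in\mathcal{M}$ and set $Y(t):=\pi(\varphi_t(X_0))$. The chain rule and the projectability relation $D\pi_X(V(X))=V_*(\pi(X))$ from Section~4 give
\begin{equation}\nonumber
\dot Y(t)=D\pi\big(V(\varphi_t(X_0))\big)=V_*(Y(t)),\qquad Y(0)=\pi(X_0).
\end{equation}
This is the identity already used in Section~5 in the form $\dot Y=D\pi(\mathcal{V}_{\mathcal{S}})$. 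Since $V$ is $C^2$ and $\pi$ is a submersion, $V_*$ is at least $C^1$ (locally Lipschitz suffices), so integral curves of $V_*$ are unique. Hence $Y(t)=\psi_t(\pi(X_0))$ on the whole forward interval of existence of $\varphi_t(X_0)$.

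Applying the conjugacy to $\mathcal{A}$ together with $\varphi_t(\mathcal{A})=\mathcal{A}$ gives
\begin{equation}\nonumber
\psi_t(\mathcal{A}_\ast)=\psi_t(\pi(\mathcal{A}))=\pi(\varphi_t(\mathcal{A}))=\pi(\mathcal{A})=\mathcal{A}_\ast .
\end{equation}
This includes both inclusions, $\psi_t(\mathcal{A}_\ast)\subseteq\mathcal{A}_\ast$ and the surjectivity $\mathcal{A}_\ast\subseteq\psi_t(\mathcal{A}_\ast)$. The leaf-saturation of $\mathcal{A}$ stated before the proposition is not essential here. It does, however, give the cleaner identity $\mathcal{A}=\pi^{-1}(\mathcal{A}_\ast)$, which is worth recording for later use.

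The main obstacle is making sure the reduced semiflow $\psi_t$ is well defined on $\mathcal{A}_\ast$ for all $t\ge0$, that is, that solutions of $\dot Y=V_*(Y)$ starting in $\mathcal{A}_\ast$ do not blow up or leave $\mathcal{M}_{\mathrm{red}}$ in finite time. I would handle this with the conjugacy itself. Through each $Y_0\in\mathcal{A}_\ast$ choose $X_0\in\pi^{-1}(Y_0)\cap\mathcal{A}$. The curve $\pi(\varphi_t(X_0))$ exists for all $t\ge0$ because $\varphi_t$ is a global semiflow, and it stays in the compact set $\mathcal{A}_\ast$. By uniqueness, this curve is the maximal integral curve of $V_*$ through $Y_0$, so $\psi_t$ is globally defined on $\mathcal{A}_\ast$. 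Well-definedness with respect to the choice of lift $X_0$ is again a consequence of uniqueness for $V_*$.
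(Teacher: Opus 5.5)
Your proposal is correct and follows the paper's route: compactness comes from continuity of $\pi$ applied to the compact set $\mathcal{A}$, and invariance comes from $\varphi_t(\mathcal{A})=\mathcal{A}$ together with projectability of $\mathcal{V}$. The only difference is that you spell out what the paper leaves implicit, namely the conjugacy $\pi\circ\varphi_t=\psi_t\circ\pi$ obtained from uniqueness of integral curves of $V_*$, and the forward completeness of $\psi_t$ on $\mathcal{A}_\ast$.
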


\begin{proof}
Compactness follows from continuity of $\pi$ and compactness of $\mathcal{A}$. Invariance follows from invariance of $\mathcal{A}$ under the intrinsic evolution together with projectability of the vector field $\mathcal{V}$, which implies that the reduced semiflow is well defined on $\mathcal{M}_{\mathrm{red}}$.
\end{proof}

Let $\dim(\mathcal{M})=m$ and suppose that the null distribution has constant rank $\operatorname{rank}(\mathcal{N})=k$. Then each leaf of the null foliation has dimension $k$, and the reduced manifold $\mathcal{M}_{\mathrm{red}}$ has dimension $m-k$.

\begin{theorem}
Suppose that
\begin{enumerate}
\item the intrinsic evolution defines a continuous semiflow which is asymptotically compact,
\item the null distribution is involutive with compact embedded leaves,
\item the intrinsic vector field is projectable with respect to the null foliation.
\end{enumerate}
Then the projected attractor $\mathcal{A}_\ast$ is a compact invariant set in $\mathcal{M}_{\mathrm{red}}$ satisfying $\dim_T(\mathcal{A}_\ast)\leq m-k$, where $\dim_T$ denotes the covering dimension.
\end{theorem}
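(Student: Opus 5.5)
The approach is to split the statement into two independent parts. The first part, that $\mathcal{A}_\ast$ is compact and invariant, follows at once from the preceding Proposition. The second part, the covering-dimension bound, is essentially a fact about compact subsets of an $(m-k)$-dimensional manifold. I will establish the existence of $\mathcal{A}$ first. Hypothesis~1, combined with the bounded absorbing set assumed at the start of this section, gives the compact invariant global attractor $\mathcal{A}\subset\mathcal{M}$ by the standard result quoted above.

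Hypothesis~2 supplies involutivity together with compact embedded leaves. As in Section~4, this makes $\pi:\mathcal{M}\to\mathcal{M}_{\mathrm{red}}$ a smooth surjective submersion onto a Hausdorff manifold whose fibres are the leaves. Since $\operatorname{rank}\mathcal{N}=k$ and $\dim\mathcal{M}=m$, the manifold $\mathcal{M}_{\mathrm{red}}$ has dimension $m-k$. Hypothesis~3 gives the reduced vector field $V_\ast$ with $D\pi\circ V=V_\ast\circ\pi$. Hence the flows are conjugate, $\pi\circ\varphi_t=\varphi^{\mathrm{red}}_t\circ\pi$. Applying $\pi$ to $\varphi_t(\mathcal{A})=\mathcal{A}$ then yields $\varphi^{\mathrm{red}}_t(\mathcal{A}_\ast)=\mathcal{A}_\ast$. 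Compactness of $\mathcal{A}_\ast$ holds because it is the continuous image of a compact set. This is exactly the preceding Proposition, which I would simply invoke.

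For the dimension bound, the plan has three steps.
\begin{enumerate}
\item I would first note that $\mathcal{M}_{\mathrm{red}}$ is a second countable Hausdorff topological manifold of dimension $m-k$. Hausdorffness is assumed, and second countability is inherited from $\mathcal{M}$ because $\pi$ is an open continuous surjection. Consequently $\mathcal{M}_{\mathrm{red}}$ is metrizable and separable.
\item Next I would invoke the classical fact that a separable metrizable $d$-manifold has covering dimension $d$. This comes from the Menger--Nöbeling/Lebesgue theorem that $\dim_T\mathbb{R}^d=d$, together with the countable sum theorem applied to a countable cover by closed coordinate balls.
\item Finally I would apply the subset theorem: for a subspace $A$ of a separable metric space $X$, $\dim_T A\le\dim_T X$. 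Applied to $\mathcal{A}_\ast\subset\mathcal{M}_{\mathrm{red}}$, this gives $\dim_T\mathcal{A}_\ast\le m-k$.
\end{enumerate}
An alternative route is possible because $\mathcal{A}_\ast$ is compact. It can be covered by finitely many closed coordinate balls $\overline{B}_i$, each homeomorphic to a closed ball in $\mathbb{R}^{m-k}$. The closed sets $\mathcal{A}_\ast\cap\overline{B}_i$ each have dimension at most $m-k$, and the finite sum theorem for closed subsets of a normal space concludes the argument.

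I expect the main obstacle to be the topological bookkeeping, not any dynamics: one must make sure the leaf space really is a genuine $(m-k)$-manifold, so that the dimension theory applies. I would settle this by appealing directly to the regular-foliation quotient result stated in Section~4. The argument uses compact embedded leaves, which in fact yield a locally trivial fibration by Reeb stability, and the Hausdorff assumption. Note that the saturation property $\mathcal{A}=\pi^{-1}(\mathcal{A}_\ast)$ is not needed for the inequality itself. It only serves to interpret $\mathcal{A}_\ast$ as carrying the full asymptotic dynamics, so I would mention it but not rely on it.
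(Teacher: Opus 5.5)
Your proposal is correct and follows the paper's proof. Compactness and invariance of $\mathcal{A}_\ast$ come from the preceding Proposition, and the bound $\dim_T(\mathcal{A}_\ast)\le m-k$ is monotonicity of covering dimension for a subset of the $(m-k)$-manifold $\mathcal{M}_{\mathrm{red}}$; you unpack this (second countability, $\dim_T$ of a separable metrizable manifold, subset or finite sum theorem) where the paper just cites ``standard properties.''

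One caution on a non-essential aside: compact embedded leaves with a Hausdorff leaf space do not by themselves give a locally trivial fibration via Reeb stability. That requires trivial holonomy, and a Seifert fibration with exceptional fibres is a counterexample. So, as the paper does, rest the manifold structure of $\mathcal{M}_{\mathrm{red}}$ solely on the quotient assumption of Section~4.
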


\begin{proof}
Since $\mathcal{A}_\ast\subset\mathcal{M}_{\mathrm{red}}$ and $\mathcal{M}_{\mathrm{red}}$ is a smooth manifold of dimension $m-k$, standard properties of the covering dimension yield
\[
\dim_T(\mathcal{A}_\ast)\leq \dim(\mathcal{M}_{\mathrm{red}})=m-k.
\]
\end{proof}

Thus the degeneracy of the induced bilinear form confines the effective asymptotic dynamics to an invariant compact subset of the reduced phase space of strictly lower dimension. 

The dimensional estimate obtained above reflects the fact that dissipation occurs exclusively in directions transverse to the null foliation. Under additional regularity assumptions on the causal functional governing the intrinsic evolution, it becomes possible to exclude further degeneracy in these transversal directions. In the subsequent section, a Morse--type nondegeneracy condition will be used to obtain an exact formula for the topological dimension of the global attractor.

\section{Transversal Nondegeneracy and Reduced Attractor Structure}

The dimensional estimate obtained in the preceding section reflects the fact that dissipation occurs exclusively in directions transverse to the null foliation. Under additional nondegeneracy assumptions on the Lyapunov functional governing the intrinsic evolution, it becomes possible to characterize the structure of the reduced invariant set associated with the projected dynamics.

Let $\Psi:\mathcal{M}\rightarrow\mathbb{R}$ be a continuously differentiable functional compatible with the null distribution and exhibiting transversal dissipation.

Assume that for every $X\in\mathcal{Z}$ satisfying $d\Psi(X)=0$, the Hessian satisfies $D^2\Psi\big|_{\mathcal{S}_X}$ is nondegenerate. Define the set of critical points
\[
\mathcal{C} := \left\{ X\in\mathcal{M}: d\Psi(X)=0 \right\}.
\]
Since $d\Psi$ vanishes on $\mathcal{N}_X$ by compatibility,
the differential $D(d\Psi)(X)$ has kernel containing
$\mathcal{N}_X$. Nondegeneracy of the restriction of
$D^2\Psi$ to $\mathcal{S}_X$ therefore implies, by the
implicit function theorem, that $T_X\mathcal C=\mathcal{N}_X$. In particular, $\mathcal{C}$ is a smooth embedded submanifold
of $\mathcal{M}$ locally tangent to the null foliation.

Let $X\in\mathcal{C}$ and consider the linearization of the intrinsic vector field $L_X:=D\mathcal{V}(X)$. With respect to the splitting $T_X\mathcal{M} = \mathcal{N}_X\oplus\mathcal{S}_X$ define the transversal operator
\[
L_X^{\mathcal{S}}
:=
\Pi_{\mathcal{S}_X}\circ L_X\big|_{\mathcal{S}_X},
\]
where $\Pi_{\mathcal{S}_X}$ denotes the projection onto $\mathcal{S}_X$ along $\mathcal{N}_X$.

Assume that
\[
\sigma(L_X^{\mathcal{S}})\cap i\mathbb{R}=\varnothing
\quad\text{for all }X\in\mathcal{C}.
\]


Then the intrinsic evolution is normally hyperbolic in directions transverse to the null distribution in the sense that the linearized dynamics admit no center spectrum on the transversal bundle.

Let $\pi:\mathcal{M}\rightarrow\mathcal{M}_{\mathrm{red}}$ denote the projection onto the reduced manifold and let $\mathcal{A}_\ast=\pi(\mathcal{A}) \subset\mathcal{M}_{\mathrm{red}}$ be the projected attractor.

\begin{theorem}
Suppose that
\begin{enumerate}
\item the intrinsic evolution defines a continuous semiflow which is asymptotically compact,
\item the null distribution is involutive with compact embedded leaves,
\item the intrinsic vector field is projectable with respect to the null foliation,
\item the functional $\Psi$ satisfies the transversal Morse--type nondegeneracy condition,
\item Assume that there exists $\eta>0$ such that for every
$X\in A$ the spectrum of the transversal
linearization satisfies
\[
\sigma(L_X^S)\cap
\{z\in\mathbb C:\operatorname{Re}z\ge -\eta\}
=\varnothing.
\]
\end{enumerate}
Then the reduced semiflow on $\mathcal{M}_{\mathrm{red}}$ possesses a compact invariant set $\mathcal{A}_\ast$ which is locally finite--dimensional and contains no neutral directions in the sense that the linearized reduced dynamics admit no purely imaginary spectrum along $\mathcal{A}_\ast$.
\end{theorem}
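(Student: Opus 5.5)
The proof falls into three parts: compactness and invariance of $\mathcal{A}_\ast$, local finite-dimensionality, and the spectral claim. The first is already available. Under hypotheses 1--3 the previous theorem and Proposition~6 show that $\mathcal{A}_\ast=\pi(\mathcal{A})$ is compact and invariant under the reduced semiflow generated by $V_*$. By the dimension estimate of the previous theorem it is contained in the $(m-k)$--dimensional manifold $\mathcal{M}_{\mathrm{red}}$, so $\dim_T(\mathcal{A}_\ast)\le m-k<\infty$. This also gives local finite-dimensionality, since every neighbourhood of a point of $\mathcal{A}_\ast$ sits in a chart of $\mathbb{R}^{m-k}$. The substantive content is therefore the spectral statement.

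To handle it, I will compare the linearization of $V_*$ at $y=\pi(X)$ with $L_X^{\mathcal{S}}$. The projectability relation $D\pi_X(V(X))=V_*(\pi(X))$ holds identically in $X$. I will differentiate it along $\mathcal{S}_X$ at points where $V_*(y)=0$, which are exactly the points $X$ with $V(X)\in\mathcal{N}_X$, i.e.\ $X\in\mathcal{Z}$. At such points the term involving $D^2\pi$ contracts against $V(X)\in\ker D\pi_X$. Using invariance of the leaves, it reduces to a term that vanishes after projecting along $\mathcal{N}_X$. The aim is the conjugacy
\[
DV_*(y)\circ D\pi_X\big|_{\mathcal{S}_X}=D\pi_X\big|_{\mathcal{S}_X}\circ L_X^{\mathcal{S}},
\]
where $D\pi_X|_{\mathcal{S}_X}$ is the isomorphism $\mathcal{S}_X\to T_y\mathcal{M}_{\mathrm{red}}$ from Section~4. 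This yields $\sigma(DV_*(y))=\sigma(L_X^{\mathcal{S}})$. Hypothesis~5 then gives $\operatorname{Re}\sigma\le-\eta$, so there is no imaginary-axis spectrum.

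At points of $\mathcal{A}_\ast$ where $V_*(y)\neq0$, "linearized reduced dynamics" must mean the variational equation $\dot\xi=DV_*(Y(t))\xi$ along the reduced orbit. The same computation shows that this equation is conjugate, via $D\pi|_{\mathcal{S}}$, to $\dot\zeta=L^{\mathcal{S}}_{X(t)}\zeta$ modulo terms valued in $\mathcal{N}$, which drop out after projection. The uniform gap $\eta$ on the compact set $\mathcal{A}$ then gives uniform exponential contraction and hence empty Sacker--Sell (dichotomy) spectrum on $i\mathbb{R}$. Hypothesis~4 is used to locate the equilibria: $\mathcal{C}$ is a union of leaves with $T_X\mathcal{C}=\mathcal{N}_X$, so every reduced equilibrium in $\mathcal{A}_\ast$ is isolated. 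This is consistent with the absence of neutral directions.

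The main obstacle is the conjugacy step. $L_X=DV(X)$ is not intrinsically defined away from zeros of $V$, and at a point where $V(X)\in\mathcal{N}_X\setminus\{0\}$ one needs a connection or a choice of foliated charts. I would work in simple foliated coordinates $(u,w)$ with $\pi(u,w)=w$ and $\mathcal{N}=\mathrm{span}\{\partial_u\}$. Projectability then means that the $w$--component of $V$ is independent of $u$. Choosing $\mathcal{S}$ locally as $\mathrm{span}\{\partial_w\}$, the transversal block of $DV$ is exactly $DV_*$. If $\mathcal{S}$ is a different complement, the transversal block changes only by a similarity, so the spectrum is unchanged. The paper's $L_X^{\mathcal{S}}$ should thus be interpreted in such adapted coordinates.

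If the nonautonomous dichotomy version along non-equilibrium orbits proves too delicate, I would fall back on stating the spectral conclusion at the equilibria and invariant leaves in $\mathcal{A}_\ast\cap\pi(\mathcal{C})$. There the above computation is clean.
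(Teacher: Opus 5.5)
The paper states this theorem without proof, so there is no argument to compare against; I judge your proposal on its own merits. Your handling of compactness, invariance and local finite-dimensionality is fine. So is the equilibrium computation: in simple foliated charts, projectability makes the $w$--component of $V$ independent of $u$. Hence at a zero $y=\pi(X)$ of $V_*$ (i.e.\ $X\in\mathcal{Z}$) the transversal block is $DV_*(y)$, intrinsic up to similarity, and hypothesis 5 gives $\operatorname{Re}\sigma(DV_*(y))\le-\eta$.

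The genuine gap is the step at points with $V_*(y)\neq0$. There you assert that the uniform bound on the frozen spectra of $L^{\mathcal{S}}_{X(t)}$ yields uniform exponential contraction. Frozen-time spectra do not control a nonautonomous variational equation, and off the zero set the bound is not even intrinsic. Your similarity remark covers a change of the complement $\mathcal{S}$, but a change of chart on $\mathcal{M}_{\mathrm{red}}$ alters $DV_*(y)$ by a second-derivative term contracted with $V_*(y)\neq0$. For $\dot r=r(1-r^2)$, $\dot\theta=1$, the Euclidean Jacobian has the double eigenvalue $-1$ at every point of the cycle $r=1$ (in polar coordinates its eigenvalues are $0,-2$). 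Yet the variational equation along the cycle has the solution $\xi(t)=V_*(Y(t))$ of constant length.

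Worse, the target itself is unattainable. Through any $y\in\mathcal{A}_\ast$ there is a complete orbit in $\mathcal{A}_\ast$, because $\varphi_t(\mathcal{A})=\mathcal{A}$. If $V_*(y)\neq0$, then $\xi(t)=V_*(Y(t))$ is a bounded nonzero solution of the variational equation on all of $\mathbb{R}$, so $0$ always lies in the Sacker--Sell spectrum. An exponential dichotomy would force $V_*\equiv0$ along the orbit. Your fallback, restricted to $\mathcal{A}_\ast\cap\pi(\mathcal{C})$, therefore proves only a weaker statement.

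The missing idea is that $\mathcal{A}_\ast$ contains no non-equilibrium points. This comes from the standing transversal dissipation of $\Psi$ (assumed at the start of the section), which your plan never uses.
Since $d\Psi$ annihilates $\mathcal{N}$ and leaves are connected, $\Psi=\Psi_{\mathrm{red}}\circ\pi$ with $d\Psi_{\mathrm{red}}(V_*)\circ\pi=d\Psi(\mathcal{V}_{\mathcal{S}})\le-c\|\mathcal{V}_{\mathcal{S}}\|^2_{\mathcal{S}}$. So $\Psi_{\mathrm{red}}$ strictly decreases along every non-equilibrium reduced orbit.
By the LaSalle argument, the $\alpha$--limit set of a complete orbit in $\mathcal{A}_\ast$ is a connected set of equilibria.
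By your equilibrium computation and hypothesis 5, each equilibrium in $\mathcal{A}_\ast$ is a hyperbolic sink, hence isolated. Note that reduced equilibria are $\pi(\mathcal{Z})$, not $\pi(\mathcal{C})$, so isolation comes from invertibility of $DV_*(y)$, not from hypothesis 4 as you claim. The $\alpha$--limit set is therefore a single sink $e$.
But no nonconstant orbit can tend to a sink as $t\to-\infty$: if $Y(t_n)\to e$ with $t_n\to-\infty$, local contraction near $e$ gives $\operatorname{dist}(Y(0),e)\le K\operatorname{dist}(Y(t_n),e)\to0$.
Hence $\mathcal{A}_\ast$ is a finite set of hyperbolic sinks, and the spectral conclusion reduces exactly to the equilibrium case you already have.

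You are right to read hypothesis 3 as $[V,\Gamma(\mathcal{N})]\subset\Gamma(\mathcal{N})$. The paper's literal condition $\varphi_t(\mathcal{L}_\alpha)=\mathcal{L}_\alpha$ would give $V_*\equiv0$ and make hypothesis 5 unsatisfiable.
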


Thus under Morse--type nondegeneracy of the Lyapunov functional, the effective asymptotic dynamics on the reduced phase space are governed by a strictly dissipative invariant set of dimension not exceeding that of the quotient manifold.

The abstract mechanism described above applies to constrained dynamical systems arising from geometric evolution equations. In such settings, degeneracy of the induced Lorentzian structure is typically associated with gauge directions determined by the constraint manifold. In the subsequent section, the foregoing results will be applied to an evolution system arising from Einstein--scalar dynamics.

\section{Constraint--Induced Global Dynamical Reduction in Constraint--Preserving Systems}

Let $(U,\omega)$ be a finite--dimensional symplectic manifold and let $\Phi : U \rightarrow \mathbb{R}^{k}$ be a smooth constraint map whose components are first--class in the sense that
\[
\{\Phi_i,\Phi_j\}=0
\quad \text{on }
M:=\Phi^{-1}(0).
\]
Then $M$ defines a smooth embedded submanifold of $U$, referred to as the constraint manifold. The restriction of the ambient symplectic form to $M$ defines a presymplectic form $\omega_M := \omega\big|_{TM}$, whose kernel determines the characteristic distribution 
\[
\mathcal{N}_X := \{Y\in T_XM:\omega_M(Y,Z)=0 \text{ for all }Z\in T_XM\}.
\]
Assume that $\dim\mathcal{N}_X=k$ is constant on $M$, so that $\mathcal{N}\subset TM$ defines a smooth vector subbundle.

Let $V$ be a smooth vector field on $M$ generating a flow $\varphi_t$ that preserves the constraint manifold.

Assume the following:

\begin{enumerate}
\item[(H1)] Every trajectory of the constrained flow generated by $V$ remains precompact in $M$.

\item[(H2)] The characteristic distribution $\mathcal{N}\subset TM$ has constant rank.

\item[(H3)] There exists a smooth splitting $TM=\mathcal N\oplus S$ and constants $\alpha>0,\ C>0$ such that
\[
\|D\varphi_t(X)v\|
\le
C e^{-\alpha t}\|v\|
\quad
\text{for all }X\in M,\ v\in S_X,\ t\ge0.
\]

\item[(H4)] The bundles $\mathcal N$ and $S$ are invariant under the linearized flow in the sense that
\[
D\varphi_t(X)\mathcal N_X\subset\mathcal N_{\varphi_t(X)},
\qquad
D\varphi_t(X)S_X\subset S_{\varphi_t(X)}.
\]
\end{enumerate}

Define the closed subset
\[
\Sigma
:=
\{X\in M:V(X)\in\mathcal N_X\}.
\]

\begin{remark}
The hypothesis (H3) expresses transversal contraction relative to the characteristic distribution and does not impose dissipativity on the full intrinsic flow.
\end{remark}

\begin{theorem}[Global Dynamical Reduction]
Under hypotheses {\rm(H1)--(H4)}, the set $\Sigma$ is positively invariant under the flow $\varphi_t$ and every bounded trajectory satisfies
\[
\lim_{t\to\infty}
\operatorname{dist}(X(t),\Sigma)=0.
\]
\end{theorem}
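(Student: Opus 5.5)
The plan rests on the identity $V(\varphi_t(X))=D\varphi_t(X)\,V(X)$, valid for any flow generated by $V$. Write $V(X)=V_{\mathcal N}(X)+V_S(X)$ using the splitting $TM=\mathcal N\oplus S$ from (H3). By (H4), $D\varphi_t(X)V_{\mathcal N}(X)\in\mathcal N_{\varphi_t(X)}$ and $D\varphi_t(X)V_S(X)\in S_{\varphi_t(X)}$. The decomposition at $\varphi_t(X)$ is unique, so
\[
V_{\mathcal N}(\varphi_t(X))=D\varphi_t(X)V_{\mathcal N}(X),\qquad V_S(\varphi_t(X))=D\varphi_t(X)V_S(X).
\]
In words, the linearized flow commutes with the projection $\Pi_S$ along $\mathcal N$ when that projection is evaluated along the vector field itself. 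This single observation drives both parts of the theorem.

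\emph{Positive invariance.} Observe that $\Sigma=\{X\in M: V_S(X)=0\}$. If $X\in\Sigma$, the identity above gives $V_S(\varphi_t(X))=D\varphi_t(X)\,0=0$ for all $t\ge0$, so $\varphi_t(X)\in\Sigma$. The only point to record here is that $\Sigma$ is closed. This holds because $X\mapsto V_S(X)=\Pi_S V(X)$ is continuous, since the splitting is smooth and $\Pi_S$ is therefore a smooth bundle map.

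\emph{Attraction.} The same identity combined with (H3) gives
\[
\|V_S(X(t))\|\le C e^{-\alpha t}\|V_S(X_0)\|\longrightarrow 0 \quad (t\to\infty).
\]
By (H1) the trajectory has compact closure $K\subset M$. Every $\omega$--limit point $Y$ is a limit $X(t_k)\to Y$, and continuity of $V_S$ and of the fibre norm gives $\|V_S(Y)\|=\lim_k\|V_S(X(t_k))\|=0$, so $\omega(X_0)\subseteq\Sigma$.

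To conclude, argue by contradiction. Suppose $\operatorname{dist}(X(t),\Sigma)\not\to0$. Then there are $\varepsilon>0$ and $t_k\to\infty$ with $\operatorname{dist}(X(t_k),\Sigma)\ge\varepsilon$. Precompactness gives a convergent subsequence with limit $Y\in\omega(X_0)\subseteq\Sigma$. On the other hand, continuity of $\operatorname{dist}(\cdot,\Sigma)$ gives $\operatorname{dist}(Y,\Sigma)\ge\varepsilon$, which is a contradiction.

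The step needing the most care is the use of norms. The estimate (H3) is stated with a fibrewise norm $\|\cdot\|$ on $S$, and passing from $\|V_S(X(t_k))\|\to0$ to $V_S(Y)=0$ requires this norm to depend continuously on the base point; I will assume it comes from a continuous Riemannian structure on $S$. If only some family of norms is given, I will instead use the fact that on the compact set $K$ any two continuous fibre norms are uniformly equivalent. This makes the decay $\|V_S(X(t))\|\to0$ independent of the choice of norm on $K$, and the limiting argument then goes through. I also note that boundedness of the trajectory is used only through (H1), to guarantee that $\omega(X_0)$ is nonempty and that the contradiction argument can extract limits.
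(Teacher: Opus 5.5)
Your proposal is correct and follows the paper's proof. Both use the identity $V_S(\varphi_t(X))=D\varphi_t(X)V_S(X)$ from (H4), both get exponential decay of $\|V_S(X(t))\|$ from (H3), and both obtain positive invariance because $V_S=0$ is preserved along the flow. Where you differ, it is in your favour: the paper passes directly from $\|V_S(X(t))\|\to0$ to $\operatorname{dist}(X(t),\Sigma)\to0$, whereas you supply the compactness argument via (H1) that this step actually needs, and you justify the identity using both invariances in (H4) plus uniqueness of the splitting, where the paper cites only the invariance of $S$.
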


\begin{proof}
Let $X(t)=\varphi_t(X_0)$ be a bounded trajectory. Decompose
\[
V(X)
=
V_{\mathcal N}(X)
+
V_S(X)
\]
with respect to the splitting $TM=\mathcal N\oplus S$. By hypothesis (H4) the bundle $S$ is invariant under the linearized flow, and therefore $V_S(X(t)) =D\varphi_t(X_0)V_S(X_0)$. Hypothesis (H3) then yields $\|V_S(X(t))\| \le C e^{-\alpha t}\|V_S(X_0)\|$. Consequently, $\lim_{t\to\infty}\|V_S(X(t))\|=0$. Since $V_S(X)=0$ if and only if $V(X)\in\mathcal N_X$, it follows that $\operatorname{dist}(X(t),\Sigma)\to0$ as $t\to\infty$.

To establish invariance, let $X\in\Sigma$. Then $V(X)\in\mathcal N_X$, and by (H4) one has $D\varphi_t(X)V(X)\in\mathcal N_{\varphi_t(X)}$. Hence $V(\varphi_t(X))\in\mathcal N_{\varphi_t(X)}$, which implies $\varphi_t(X)\in\Sigma$ for all $t\ge0$.
\end{proof}

\begin{corollary}
Assume in addition that $\mathcal N$ is involutive and generates a regular foliation with smooth quotient manifold $M_{\mathrm{red}} := M/\mathscr F$. Then for every bounded trajectory $X(t)$, the projected trajectory $\pi(X(t))$ converges in $M_{\mathrm{red}}$ as $t\to\infty$.
\end{corollary}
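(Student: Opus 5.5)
We prove the corollary by showing that the projected curve $Y(t):=\pi(X(t))$ has finite length in $M_{\mathrm{red}}$, and then using precompactness to locate its limit. We work with any Riemannian metric on $M_{\mathrm{red}}$ compatible with the quotient structure. The reduced metric $g_{\mathrm{red}}$ built from the splitting is a natural choice, but any smooth Riemannian metric will do. Convergence is a topological statement, so the choice of metric is immaterial once we have a finite-length bound on a compact set.

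First we compute the velocity of $Y$. Since $\ker D\pi_X=\mathcal N_X$, we have
\[
\dot Y(t)=D\pi_{X(t)}\big(V(X(t))\big)=D\pi_{X(t)}\big(V_S(X(t))\big).
\]
The proof of the Global Dynamical Reduction theorem gives
\[
V_S(X(t))=D\varphi_t(X_0)V_S(X_0),
\]
using the invariance of $S$ from (H4), together with the identity $V(\varphi_t X)=D\varphi_t V(X)$ and uniqueness of the decomposition. Hypothesis (H3) then yields
\[
\|V_S(X(t))\|\le Ce^{-\alpha t}\|V_S(X_0)\|.
\]

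Next we bound $\dot Y$ uniformly. By (H1) the trajectory stays in a compact set $K\subset M$. On $K$, the map $D\pi|_S$ is a continuous family of linear maps, so its operator norm is bounded by some constant $\Lambda_K$. Therefore
\[
\|\dot Y(t)\|\le \Lambda_K C e^{-\alpha t}\|V_S(X_0)\|,
\]
and it follows that $\int_0^\infty\|\dot Y(t)\|\,dt<\infty$.

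Finally we conclude convergence. The tail length $\int_s^t\|\dot Y\|\,d\tau$ bounds the Riemannian distance $d(Y(s),Y(t))$, so $Y(t)$ is Cauchy for the distance function. The set $\pi(K)$ is compact and contains the whole curve $Y$. Every sequence $Y(t_k)$ therefore has a convergent subsequence, and the Cauchy property forces all such subsequential limits to coincide. Hence $Y(t)\to Y_\infty\in\pi(K)$. This step uses only compactness of $\pi(K)$; completeness of $M_{\mathrm{red}}$ is not needed.

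The main obstacle is the middle step. Exponential decay of $V_S$ must be converted into decay of $\dot Y$ in a norm on $M_{\mathrm{red}}$. Two facts are needed for this. One is that the norm in (H3) is comparable, uniformly on $K$, to the norm used on $T M_{\mathrm{red}}$; compactness supplies this comparability. The other is the identity $V_S(X(t))=D\varphi_tV_S(X_0)$, which rests on the equivariance $D\varphi_tV=V\circ\varphi_t$ and on projecting consistently onto $S$ at the two base points. Both facts must be checked carefully, but they follow from (H1) and (H4).
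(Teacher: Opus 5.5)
Your proposal is correct and follows essentially the same route as the paper: (H3)--(H4) give exponential decay of $V_S(X(t))$, then $\frac{d}{dt}\pi(X(t))=D\pi\big(V_S(X(t))\big)$ because $\ker D\pi=\mathcal N$, so the projected curve has finite length and therefore converges. You also make explicit two steps the paper leaves implicit: you use (H1) to bound $D\pi$ uniformly on the compact closure of the trajectory, and you use compactness of $\pi(K)$ instead of an unstated completeness of $M_{\mathrm{red}}$ to turn the Cauchy property into convergence.
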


\begin{proof}
Since $\|V_S(X(t))\| \le C e^{-\alpha t}\|V_S(X_0)\|$, one has
\[
\int_0^\infty
\|V_S(X(t))\|\,dt<\infty.
\]
Projectability of $V$ yields $\frac{d}{dt}\pi(X(t)) = D\pi\big(V_S(X(t))\big)$, and therefore
\[
\int_0^\infty \left\| \frac{d}{dt}\pi(X(t)) \right\|dt<\infty,
\]
which implies convergence of $\pi(X(t))$.
\end{proof}

\begin{remark}
The asymptotic confinement established above arises from contraction acting transversely to the characteristic distribution determined by the presymplectic form.
\end{remark}

\section{Application to Einstein--Scalar Evolution}

The abstract framework developed in the preceding sections applies to constrained dynamical systems arising from geometric evolution equations whose phase space is determined by Hamiltonian constraints. The foregoing discussion is intended to illustrate the geometric framework; verification of the required hypotheses for the full Einstein–scalar evolution is left for future investigation. As an illustrative example, consider the evolution of a spatially homogeneous Lorentzian geometry minimally coupled to a scalar field.

Let $(\mathcal{N},\mathbf{g})$ be a four--dimensional Lorentzian spacetime and let $\varphi$ be a real scalar field with potential $V:\mathbb{R}\rightarrow\mathbb{R}$. The coupled Einstein--scalar field equations are given by
\begin{align}
G_{\mu\nu} &= T_{\mu\nu}, \\
\Box_{\mathbf{g}}\varphi &= V'(\varphi),
\end{align}
where
\[
T_{\mu\nu}
=
\nabla_\mu\varphi\nabla_\nu\varphi
-
\left(
\frac{1}{2}\mathbf{g}^{\alpha\beta}\nabla_\alpha\varphi\nabla_\beta\varphi
+V(\varphi)
\right)\mathbf{g}_{\mu\nu}.
\]

Assume that the spacetime admits a spatially homogeneous foliation with induced metric $h_{ij}(t)$ and extrinsic curvature $K_{ij}(t)$. In this setting the Einstein constraint equations reduce to the Hamiltonian constraint $\mathcal{H}(h,K,\varphi,\dot{\varphi})=0$.

Let $\mathcal{U}$ denote the ambient phase space consisting of variables $(h_{ij},K_{ij},\varphi,\dot{\varphi})$, and define the constraint manifold $\mathcal{M} = \left\{ X\in\mathcal{U}: \mathcal{H}(X)=0 \right\}$.

The ADM formulation induces on $\mathcal{U}$ a presymplectic structure whose restriction to $\mathcal{M}$ determines a presymplectic form $\omega_M:=\omega\big|_{T\mathcal{M}}$. Its kernel defines the characteristic distribution
\[
\mathcal{N}_X := \{Y\in T_X\mathcal{M}: \omega_M(Y,Z)=0 \text{ for all }Z\in T_X\mathcal{M}\}.
\]

Infinitesimal time reparametrizations generated by lapse functions act on the ADM variables by $\delta_N h_{ij}=2N K_{ij}, \quad \delta_N K_{ij}=-D_iD_jN+\cdots, \quad \delta_N\varphi=N\dot{\varphi}$. Let $Y_N$ denote the corresponding infinitesimal generator on $\mathcal{M}$. Then $Y_N$ lies in the kernel of the presymplectic form $\omega_M$, and hence $Y_N\in\mathcal{N}$. Thus the gauge directions generated by the Hamiltonian constraint coincide with the characteristic distribution of the constrained phase space.

Gauge transformations form a Lie algebra under commutation. Hence for any smooth vector fields $U,V\in\Gamma(\mathcal{N})$, one has $[U,V]\in\Gamma(\mathcal{N})$, and the characteristic distribution is involutive.

Define the functional
\[
\Psi(h,K,\varphi,\dot{\varphi})
:=
\int_{\Sigma}
V(\varphi)\,d\mu_h.
\]
Since $\Psi$ depends only on the scalar field and the induced spatial metric, it follows that $d\Psi(Y)=0 \quad\text{for all }Y\in\mathcal{N}_X$, so that $\Psi$ is compatible with the characteristic distribution.

Let $\mathcal{V}$ denote the intrinsic Hamiltonian vector field on $\mathcal{M}$. Although $\Psi$ need not be monotone along the full ADM evolution, the induced functional on the reduced phase space $\Psi_{\mathrm{red}}:\mathcal{M}_{\mathrm{red}}\rightarrow\mathbb{R}$ obtained by projection along the characteristic foliation is nonincreasing along trajectories of the reduced flow after reduction by the Hamiltonian gauge under a fixed lapse choice, i.e., $\frac{d}{dt}\Psi_{\mathrm{red}}(Y(t))\le 0$. Thus dissipation occurs exclusively in directions transverse to the characteristic foliation generated by the Hamiltonian constraint.

Assuming that the gauge orbits are compact and coincide
with the leaves of the characteristic foliation, the projected dynamics on the reduced phase space are governed by a compact invariant set $\mathcal{A}_\ast\subset\mathcal{M}_{\mathrm{red}}$ whose dimension does not exceed that of the quotient manifold. Hence the long--time behavior of the Einstein--scalar evolution is determined by an invariant subset of the reduced phase space of strictly lower effective dimension relative to the full constrained system.

This example illustrates how Hamiltonian gauge symmetries in constrained geometric evolution systems generate null foliations of the induced kinetic metric, thereby enforcing asymptotic dimensional reduction of the intrinsic dynamics after reduction by the constraint foliation.

\section{Conclusion}

A geometric framework for the analysis of constrained dissipative dynamical systems evolving on phase manifolds endowed with degenerate induced bilinear forms has been developed. The intrinsic evolution was formulated on a smooth constraint hypersurface, and dissipation was characterized through the introduction of functionals compatible with the null distribution associated with the induced degeneracy.

It was shown that the presence of dissipation acting transversely to an involutive null distribution with compact embedded leaves leads to asymptotic confinement of all bounded trajectories to invariant leaves of the associated foliation. In contrast to the classical theory of dissipative systems on Riemannian manifolds, asymptotic compactness of the intrinsic evolution was established without recourse to coercive Lyapunov functionals. Instead, compactness was derived from decay occurring exclusively in directions complementary to the characteristic distribution and from the induced semiflow on the corresponding quotient phase space. Under these conditions, the intrinsic evolution admits a compact global attractor saturated by the leaves of the null foliation. The effective asymptotic dynamics are therefore governed by the projected semiflow acting on the reduced manifold obtained by quotienting along the characteristic directions.

When the Lyapunov functional satisfies a Morse--type nondegeneracy condition in directions transverse to the null distribution and the associated transversal linearization admits no center spectrum, the reduced dynamics possess no neutral directions. In this case the projected semiflow is strictly dissipative on the reduced phase space, and the long--time behavior is determined by a compact invariant subset of dimension not exceeding that of the quotient manifold.

An application to Einstein--scalar evolution illustrates how Hamiltonian gauge symmetries in constrained relativistic systems generate characteristic foliations of the phase space whose associated reduced dynamics govern the asymptotic behavior after reduction by the constraint foliation. These results indicate that constraint--induced degeneracy provides a general mechanism by which effective dimensional reduction may arise in geometric evolution equations possessing indefinite kinetic structure.

\section*{Data availability statement}
No new data were created or analysed in this study.

\section*{Acknowledgments}
The author gratefully acknowledges Midnapore College (Autonomous) for providing institutional support during the course of this work.

\bibliographystyle{unsrt}
\bibliography{sample}

\end{document}